\newtheorem{theorem}{Theorem}[section]
\newtheorem{lemma}[theorem]{Lemma}
\newtheorem{definition}{Definition}
\def\thetheorem{\thesection.\arabic{theorem}}
\def\thesection{\arabic{section}}
\def\beq{\begin{equation}\displaystyle}
\def\eeq{\end{equation}}
\def\bel{\begin{equation} \displaystyle \begin{array}{l} }
\def\eel{\end{array} \end{equation} }
\def\bell{\begin{equation} \displaystyle \begin{array}{ll}  }
\def\eell{\end{array} \end{equation} }
\def\bea{\begin{eqnarray}}
\def\eea{\end{eqnarray} }
\def\bean{\begin{eqnarray*}}
\def\eean{\end{eqnarray*} }
\newenvironment{proof}{\noindent{\bf Proof.~}}{{\mbox{}\hfill {\small \fbox{}}\\}}
\newcommand{\comment}[1]{}
\renewcommand\appendix{\bigskip {\noindent \Large \bf Appendix}
  \setcounter{section}{0}%
  \setcounter{subsection}{0}%
\setcounter{equation}{0}%
\setcounter{theorem}{0}%
\def\thesection{A.\arabic{section}}
\def\thetheorem{A.\arabic{theorem}}
\def\theequation {A.\arabic{equation}}}
\def\NN{\mathbb{N}}
\def\RR{\mathbb{R}}
\def\ds{\displaystyle}
\def\bar#1{{\overline #1}}
\def\calR{{\mathcal R}}
\definecolor{mygreen}{rgb}{0,0.65,0}
\def\ess{\mathrm{ess}}
\begin{document}


%

\title{Final size and convergence rate for an epidemic in \\ heterogeneous population}

\author{ Luis Almeida\thanks{Sorbonne Universit{\'e}, CNRS, Universit\'{e} de Paris, Inria, Laboratoire Jacques-Louis Lions UMR7598, F-75005 Paris, France. Email : \texttt{\{almeida,gregoire.nadin,benoit.perthame\}@ljll.math.upmc.fr}}
\and
Pierre-Alexandre Bliman\thanks{Sorbonne Universit{\'e}, Inria, CNRS, Universit\'{e} de Paris, Laboratoire Jacques-Louis Lions UMR7598, Equipe MAMBA, F-75005 Paris, France. Email : \texttt{pierre-alexandre.bliman@inria.fr}}
\and
Gr\'egoire Nadin\footnotemark[1]
\and
Beno\^ \i t Perthame\footnotemark[1]
\thanks{B.P. has received funding from the European Research Council (ERC) under the European Union's Horizon 2020 research and innovation programme (grant agreement No 740623). }
\and Nicolas Vauchelet\thanks{LAGA, UMR 7539, CNRS, Universit\'e Sorbonne Paris Nord,99 avenue Jean-Baptiste Cl\'ement, 93430 Villetaneuse, France.
  Email : \texttt{vauchelet@math.univ-paris13.fr}}
}

\maketitle

\begin{abstract}
We formulate a general SEIR epidemic model in a heterogenous population characterized by some trait in a discrete or continuous subset of a space $\RR^d$.
The incubation and recovery rates governing the evolution of each homogenous subpopulation depend upon this trait, and no restriction is assumed on the contact matrix that defines the probability for an individual of a given trait to be infected by an individual with another trait.
Our goal is to derive and study the final size equation fulfilled by the limit distribution of the population.
We show that this limit exists and satisfies the final size equation.
The main contribution is to prove the uniqueness of this solution among the distributions smaller than the initial condition.
We also establish that the dominant eigenvalue of the next-generation operator (whose initial value is equal to the basic reproduction number) decreases along every trajectory until a limit smaller than 1.
The results are shown to remain valid in presence of diffusion term.
They generalize previous works corresponding to finite number of traits (including  metapopulation models) or to rank 1 contact matrix (modeling e.g.\ susceptibility or infectivity presenting heterogeneity independently of one another).
\end{abstract}

{\em Keywords: }   SIR model; Epidemic spread; structured population equations; next generation operator; long term asymptotics.

\noindent 2010 {\em Mathematics Subject Classification.: } 35Q92; 35R09; 45C05; 45G15; 92D30

\begin{color}{black}

\section{Introduction}

Observing the complex behaviour of the COVID-19, see Refs.~\cite{Gomes2,Britton,Britton2,DiLauro,Dolbeault,Gomes}, reveals the importance of the population heterogeneity in the dynamics of an outbreak.
Inspired by this question, but in contrast with recent papers aiming at reproducing the dynamics of the pandemic observed through various data, we adopt here an abstract point of view and investigate issues related to the concepts of epidemic final size and herd immunity in an ample setting.

Before explaining in more details the context and contribution, we present the system under study.
We consider here the general SEIR epidemic model \eqref{eqG1}, which describes the spread of a disease in a heterogeneous population characterized by a ``trait" $x\in \Omega$, for a given open subset $\Omega \subset \RR^d$.
The underlying structure described by this trait can be very varied, typical examples being one or several of the following characteristics: propensity to have social contacts and hazardousness of the latter (in the sense of the transmission disease), susceptibility, infectivity, characteristics of the immunological response, age, spatial location, etc.
\begin{subequations}
\label{eqG1}
\begin{gather}
\partial_t S(t,x) = -S(t,x) \int_\Omega \beta(x,y) I(t,y)\ dy,\qquad  S(0,x) = S_0(x) \label{eqG1a}\\
\partial_t E(t,x) =  S(t,x) \int_\Omega  \beta(x,y) I(t,y)\ dy - \alpha(x) E(t,x),\qquad E(0,x) = E_0(x),\label{eqG1c}\\
\partial_t I(t,x) =  \alpha (x) E(t,x) - \gamma(x) I(t,x),\qquad I(0,x) = I_0(x), \label{eqG1b}  \\
\partial_t R(t,x) = \gamma(x) I(t,x), \qquad R(0,x) = R_0(x). \label{eqG1d}
\end{gather}
\end{subequations}
Classical notations are used~: $S$ denotes the proportion of susceptible individuals, $E$ the proportion of those who have been exposed to the disease, $I$ the proportion of infected individuals, $R$ the proportion of removed individuals.
The component $\beta(x,y)$ of the so-called (infinite-dimensional) {\em contact matrix} $\beta$ is the product of the average contact rate between individuals of traits $x$ and $y$ and average transmission probability from an individual with trait $y$ to individual with trait $x$.
The force of infection $\int_\Omega \beta(x,y) I(t,y)\ dy$ that applies to the individuals of trait $x$ thus obeys the law of mass-action.
The average incubation rate is denoted $\alpha(x)$ and the average recovery rate is denoted $\gamma(x)$:
both quantities may depend upon the trait $x$.

As can be seen, the dynamics of the outbreak is assumed to be sufficiently fast to neglect the effects of the demography in the population.
A particular case of system \eqref{eqG1} is when the parameters $\alpha,\beta,\gamma$ are independent of the trait: the classical (``homogeneous") SEIR model, which reads
\begin{equation}
\label{eq454}
\dot S = -\beta SI,\qquad
\dot E = \beta SI-\alpha E,\qquad
\dot I = \alpha E -\gamma I,\qquad
 \dot R = \gamma I
\end{equation}
is then recovered after integration over $\Omega$.
A variant of \eqref{eqG1} with diffusion operator in the infected compartment capable to account for possible mutations or spatial displacement of the infected population is also considered, see equation \eqref{eqGLaplacian} below.

For an epidemic model where the effects of demography are neglected, the number of exposed and infected individuals is expected to go to zero after reaching an epidemic peak (if the initial proportion of susceptible is sufficient).
An important question is then to determine what will be the so-called {\em epidemic final size}, that is the cumulative number (or proportion) of individuals infected, and thus ultimately removed, during the outbreak.
For the homogeneous SEIR model \eqref{eq454}, the answer to this question is well-known. Indeed, due to the invariance of the function $S(t)+E(t)+I(t)-\dfrac{\gamma}{\beta}\ln S(t)$ along the trajectory, the asymptotic value $S_\infty$ of $S(t)$  is necessarily a root of the equation $S_\infty-\dfrac{\gamma}{\beta}\ln S_\infty=S_0+E_0+I_0-\dfrac{\gamma}{\beta}\ln S_0$.
Indeed, one may show that it is the unique root of this equation smaller than the so-called {\em herd immunity threshold} $\dfrac{\gamma}{\beta}$ ---the value of $S$ below which the number $E+I$ of exposed and infected necessarily decreases.

The study of the final size of an epidemic has been addressed in several works since its appearance in early contributions\cite{Bailey,Kermack}.
Ref.~\cite{Ma} was among the first papers to unveil the generality of this concept, extending it to models with several distinct infectious stages, or with arbitrarily distributed mean contact rate, or again having spatially heterogeneous contact structures.
Multiple susceptible classes were considered in Ref.~\cite{Arino}.
Ref.~\cite{Brauer} established final size equation for age-of-infection model.
Distributed susceptibility or distributed infectivity has been studied in Ref.~\cite{Novozhilov}, and Ref.~\cite{Andreasen} studied an epidemic model within a population of $n$ different groups.
Analysis in a general framework was presented in Ref.~\cite{Miller}, establishing sufficient conditions under which epidemic final size equation exists.
Sharp estimates and bounds are obtained in Ref.~\cite{Katriel} for general model with heterogeneous susceptibility.
Also, the case of a two-group SIR model has been studied in Ref.~\cite{Magal}. 
Finally, we mention that, in a different biological context, related questions have been addressed on models of population dynamics with heterogeneity modeled by an internal variables\cite{Desvillettes,Jabin}.

An important quantity to describe the dynamics of an epidemic is the {\em basic reproduction number} $\calR_0$. From a biological point of view, the latter represents the expected number of secondary cases directly generated by an individual in a completely susceptible population during its entire period of infectiousness.
A famous threshold property states that the disease can invade (in the sense that it will spread and reach an epidemic peak) if $\calR_0>1$ and the initial number of susceptible is sufficient, whereas it cannot if $\calR_0<1$.
Mathematically, this number has been defined in full generality in the seminal paper\cite{DiekmannR0} as the dominant eigenvalue of the so-called {\em next-generation operator}.

Our aim in the present paper is to extend some of these previous results and completely characterize  the final size of an epidemic in a heterogeneous population whose dynamics is governed by the general systems \eqref{eqG1} or \eqref{eqGLaplacian}.
On this occasion, we revisit and present in a unified setting the notions of herd immunity and basic reproductive number, and demonstrate the centrality and powerfulness of the concept of next-generation operator.

The outline of this work is the following.
We present in Section \ref{sec:AMR} the assumptions used in the sequel, and provide the key results, Theorems \ref{th1} and \ref{th2}.
They correspond respectively to epidemic spreading without and with diffusion.
Their proof is given in Sections \ref{sec:gen} and \ref{sec:genDiff}.
Section \ref{se5} shows how the present framework includes several examples previously considered in the literature.

\section{Assumptions and main results}
\label{sec:AMR} 


\paragraph{Heterogeneous populations without diffusion.}
To begin with, we investigate the final size of an epidemic in a heterogeneous population whose dynamics is modelled by system~\eqref{eqG1}.
Denote $L^\infty_+(\Omega)$ the set of measurable functions which are bounded and nonnegative a.e.\ on $\Omega$, and assume that
\begin{equation}  \label{eq:hyp}
\left\{ \begin{array}{l}
\displaystyle
\alpha \in L^\infty_+(\Omega)  \ \text{ and } \  \underline \alpha :=  \ess\inf_{x\in \Omega} \alpha(x) >0,  \\[5pt]
\displaystyle
\beta\in L^\infty_+(\Omega\times \Omega) \cap 
  L^2(\Omega\times \Omega)  \  \text{ and } \ \beta >0,
\\[5pt]  
\displaystyle
\gamma\in L^\infty_+(\Omega) \ \text{ and } \ \underline \gamma:= \ess\inf_{x\in \Omega} \gamma(x)>0.
\end{array} \right.  
\end{equation}
In order for the disease to start, we assume also that the initial proportions of susceptible individuals, and of infected and exposed individuals, are nonzero~:
\begin{equation}
  \label{eq:init}
  S_0, E_0, I_0, R_0 \in L^\infty_+(\Omega), \quad   S_0>0,
  \quad E_0+ I_0 \not\equiv 0, \quad
  \int_\Omega (S_0+E_0+I_0+R_0)\,dx = 1.
\end{equation}
The normalization assumption contained in the last identity materializes the fact that the variables constitute proportions of a given population whose overall value is conserved along time.

The first result concerns system~\eqref{eqG1}.
\begin{theorem}\label{th1}
  Assume that assumptions \eqref{eq:hyp} and \eqref{eq:init} hold, then for the solution of problem \eqref{eqG1},
  $t\mapsto S(t,\cdot)$ is a decreasing function and there exist $S_\infty, R_\infty \in L^\infty_+(\Omega)\cap L^1(\Omega)$ such that, for a.e.\ $x\in \Omega$,
\begin{subequations}
  \label{eq18}
  \begin{align}
    & \lim_{t\to +\infty} S(t,x) = S_\infty(x), \quad
    \lim_{t\to +\infty} R(t,x) = R_\infty(x), \\
    & \lim_{t\to +\infty} I(t,x) = \lim_{t\to +\infty} E(t,x) = 0.
  \end{align}
\end{subequations}
In addition, the convergence is uniform, $S_\infty>0$, $S_\infty$ is the unique solution such that $S_\infty \leq S_0$ of the {\em final size equation}
  \begin{equation}\label{eq:Sinf}
    \ln S_\infty(x) - \int_\Omega \frac{\beta(x,y)}{\gamma(y)} S_\infty(y)\,dy
    = \ln S_0(x) - \int_\Omega \frac{\beta(x,y)}{\gamma(y)} \big( S_0(y) + E_0(y) + I_0(y)\big)\,dy,
  \end{equation}

and $R_\infty = S_0+E_0+I_0+R_0-S_\infty$.

Moreover, if
 \begin{equation}
 \label{eq888}
 \ess\inf_\Omega S_0>0\qquad \text{and}\qquad
 \underline{\beta}:=  \ess\inf_{(x,y)\in \Omega\times \Omega} \beta(x,y) >0,
 \end{equation}
 then $\ess\inf_\Omega S_\infty>0$ and the convergence in \eqref{eq18} is exponential. 
\end{theorem}

An alternative form of \eqref{eq:Sinf}, useful in the sequel, may be expressed as follows:
\begin{equation}
\label{eq45}
S_\infty(x) = S_0(x) \exp\left(
\int_\Omega \frac{\beta(x,y)}{\gamma(y)} \left(
S_\infty(y) - (S_0(y) + E_0(y) + I_0(y))
\right)\,dy
\right).
  \end{equation}

The proof of Theorem \ref{th1} is the subject of Section \ref{sec:gen}.
The main difficulty in this demonstration is to prove uniqueness of the solution of the final size equation \eqref{eq:Sinf}, in the set of those $S$ at most equal to the initial condition $S_0$.
It uses as central tool an irreducibility property related to the assumption $\beta >0$.
It is possible to relax this hypothesis by only assuming that the next-generation operator (defined in \eqref{def:Kt} below) is strongly positive ---this is indeed the case if $\beta>0$.
On the contrary, $\beta\geq 0$ is not sufficient to deduce uniqueness, see a counterexample at the end of the present Section.

\paragraph{Heterogeneous populations with diffusion.}
Diffusion is introduced in the infected compartment for the second result, allowing to consider possible additional natural phenomenon as mutations or spatial diffusion of infected individuals. There is a long history on this problem, in particular to study the effect of diffusion on the basic reproduction number, the spread or extinction of the epidemic and the existence of propagating waves, see Refs.~\cite{RD1,RD2,Kendall57,Thieme77} and the references therein. Here, we are interested in characterizing the final size of the epidemic in a bounded domain.

The  corresponding system reads, for $t \geq 0$, $x\in \Omega$, 
\begin{subequations}
\label{eqGLaplacian}
\begin{gather}
\partial_t S(t,x) = -S(t,x) \int_\Omega \beta(x,y) I(t,y)\ dy,\qquad S(0,x) = S_0(x), \label{eqGLaplacianS} \\
\partial_t E(t,x) =  S(t,x) \int_\Omega \beta(x,y) I(t,y)\ dy - \alpha(x) E(t,x),\qquad E(0,x) = E_0(x),  \label{eqGLaplacianE} \\
\left\{ \begin{array}{l}
\partial_t I(t,x) =  \alpha (x) E(t,x) - \gamma(x) I(t,x)+\Delta I (t,x),\qquad I(0,x) = I_0(x), \label{eqGLaplacianI} \\[5pt]
\partial_{n}I(t,x)=0 \quad \hbox{ on  } \quad  (0,\infty)\times \partial\Omega, 
\end{array} \right.
\\
\partial_t R(t,x) = \gamma(x) I(t,x), \qquad R(0,x) = R_0(x). \label{eqGLaplacianR}
\end{gather}
\end{subequations}

Essentially the same assumptions than Theorem~\ref{th1} allow to obtain the similar result.
\begin{theorem}\label{th2}
Assume that \eqref{eq:hyp} and \eqref{eq:init} hold. Assume moreover that $\Omega$ is a smooth, bounded open  set.
  Then, for the solution of \eqref{eqGLaplacian}, $t\mapsto S(t,\cdot)$ is a  decreasing function and there exist  $S_\infty,R_\infty\in L^\infty_+(\Omega)\cap L^1(\Omega)$ such that \eqref{eq18} holds for a.e.\ $x\in \Omega$, 
uniformly.
    
Moreover, denoting $\Phi_0$ the solution of
 \begin{equation} \label{eq:Phi_0}
   \left\{ \begin{array}{l}
    -\Delta \Phi_0 + \gamma \Phi_0 = S_0 + E_0 + I_0,
    \\[5pt]
     \partial_{n}\Phi_{0}=0 \quad  \hbox{ on }  \quad  (0,\infty)\times \partial\Omega,
\end{array} \right.
\end{equation}

$S_\infty$
is characterized by
\begin{subequations}
\label{eq46}
\begin{equation}
\label{eq46a}
S_\infty = S_0 \exp \left(\int_\Omega \beta(x,y)(\Phi_\infty(y)-\Phi_0(y))\,dy\right),
\end{equation}
where $\Phi_\infty$ is the unique solution such that $\Phi_\infty \leq \Phi_0$ (with $\Phi_0$ defined in \eqref{eq:Phi_0}) of
  \begin{equation} \label{eq:Phi_inf}
  \left\{ \begin{array}{l}
    -\Delta \Phi_\infty + \gamma(x) \Phi_\infty = S_0 \exp \left(\int_\Omega \beta(x,y)(\Phi_\infty(y)-\Phi_0(y))\,dy\right),
    \\[5pt]
     \partial_{n}\Phi_{\infty}=0 \quad \hbox{ on } \quad (0,\infty)\times \partial\Omega.
     \end{array} \right.
  \end{equation}
\end{subequations}

Last, if \eqref{eq888} holds, then the convergence is exponential.
\end{theorem}

Equation~\eqref{eq46} constitutes, together with \eqref{eq:Phi_0}, the final size equation for equation \eqref{eqGLaplacian}.
It appears analogous to~\eqref{eq:Sinf} when ignoring the diffusion term.
It may also be written as 
\[
  -\Delta \Phi_\infty + \gamma \Phi_\infty = S_\infty.
\]

Again, we can relax the hypothesis $\beta>0$ by only assuming that the modified next-generation operator (defined in \eqref{eq:Kdelta} below) is strongly positive, which is of course true when $\beta>0$. Another possible set of assumption guaranteeing this strong positivity, and thus the availability of Theorem \ref{th2},
is $\beta\geq 0$ and $\Omega$ connected. In that case, even if $\beta=0$ in some parts of $\Omega$, the diffusion ensures that the infection reaches such parts. If $\beta$ is not positive and $\Omega$ is not connected, it is easy to construct counter-examples where a connected part of $\Omega$ is invaded whereas another is not.

Finally, we mention that these results may be adapted straightforwardly when the exposed compartment is neglected, that is for the heterogeneous SIR model.
Also they may be extended to other operators than the diffusion, as the fractional Laplacian or integral operators, as long as a standard spectral theory is available. Last, diffusion could be included on the Exposed compartment with the same results and methods. 

\paragraph{Nonuniqueness and positivity of $\beta$}
    In order to illustrate the necessity of assumption $\beta>0$ in \eqref{eq:hyp}, we provide a simple example for the discrete case with piecewise constant coefficient. More precisely, assume that for some disjoint sets $\Omega_1, \Omega_2$ for which $\Omega = \Omega_1\cup\Omega_2$, and some positive constants $\alpha_k, \beta_k, \gamma_k$, $k=1,2$ and $\beta_{12}$,
 the coefficients verify:
\begin{gather*}
 \alpha(x) = \alpha_1 \mathbf{1}_{\Omega_1}(x)+\alpha_2 \mathbf{1}_{\Omega_2}(x),\qquad
 \gamma(x) = \gamma_1 \mathbf{1}_{\Omega_1}(x) + \gamma_2 \mathbf{1}_{\Omega_2}(x),\\
       \beta(x,y) = \left\{
        \begin{array}{ll}
          \beta_k\,, \quad &\text{if } (x,y)\in\Omega_k\times\Omega_k,\ k=1,2   \\
          \beta_{12}\,, \quad &\text{if } (x,y)\in\Omega_1\times\Omega_2   \\
          0\,, \quad &\text{if } (x,y)\in\Omega_2\times\Omega_1.
        \end{array}\right.
\end{gather*}

    Then, denoting
$$
S_k(t) := 
\int_{\Omega_k} S(t,x)\,dx,\
E_k(t) := 
\int_{\Omega_k} E(t,x)\,dx,\
I_k(t) := 
\int_{\Omega_k} I(t,x)\,dx
$$
for $k=1,2$, system \eqref{eqG1} implies (after integration in space) that
    \begin{align*}
      \dot S_1 = - S_1 (\beta_1 I_1 + \beta_{12} I_2), \qquad
      &\dot S_2 = - \beta_2 S_2 I_2, \\
      \dot E_1 = S_1 (\beta_1 I_1 + \beta_{12} I_2) - \alpha_1 E_1, \qquad
      &\dot E_2 = \beta_2 S_2 I_2 - \alpha_2 E_2, \\
      \dot I_1 = \alpha_1 E_1 - \gamma_1 I_1, \qquad
      &\dot I_2 = \alpha_2 E_2 - \gamma_2 I_2,
    \end{align*}
    complemented with initial data $S_k^0$, $E_k^0$, $I_k^0$, for $k=1,2$ (with the notations $S_k^0 =
    \int_{\Omega_k} S_0(x)\,dx$, and so on).
Denoting $S_{k\infty}$ the limit of $S_k(t)$ when $t\to +\infty$, $k=1,2$, the final state is characterized, in this particular setting, by the system of two scalar identities
    \begin{subequations}
    \label{eq55}
    \begin{align}
      & \ln S_{1\infty} - \frac{\beta_1}{\gamma_1} S_{1\infty} - \frac{\beta_{12}}{\gamma_2} S_{2\infty} = \ln S_{1}^0 - \frac{\beta_1}{\gamma_1} (S_{1}^0+E_1^0+I_1^0) - \frac{\beta_{12}}{\gamma_2} (S_{2}^0+E_2^0+I_2^0),
      \label{eqS1inf} \\
      & \ln S_{2\infty} - \frac{\beta_{2}}{\gamma_2} S_{2\infty} = \ln S_{2}^0 - \frac{\beta_{2}}{\gamma_2} (S_{2}^0+E_2^0+I_2^0).
        \label{eqS2inf}
    \end{align}
    \end{subequations}
    If $E_2^0+I_2^0 = 0$, then \eqref{eqS2inf} admits two solutions, namely $S_2^0$ (corresponding to absence of disease in $\Omega_2$), and another solution, denoted $\underline{S_2}$, which is strictly smaller than $S_2^0$ (corresponding to a spread of the epidemic in $\Omega_2$).
    If now $E^0_1+I_1^0 > 0$, for each of these solutions, solving \eqref{eqS1inf} for $S_{1\infty}\leq S_1^0$ yields a unique solution.
    We thus obtain two distinct solutions of equation \eqref{eq55}, smaller or equal to $(S^1_0,S^2_0)$.
    Notice that, for any initial condition of \eqref{eq:Sinf} constant on each of the two sets $\Omega_1, \Omega_2$, the solution itself retains this property.
    The previous considerations thus allows to exhibit for such a choice of initial condition, two distinct solutions of the final size equation \eqref{eq:Sinf}.
As a conclusion, the uniqueness property stated in Theorem \ref{th1} may not hold if $\beta\not > 0$.

\section{General contact matrices in absence of diffusion}
\label{sec:gen}

We investigate here system \eqref{eqG1}.
After introducing some adequate notions and results, Theorem~\ref{th1} is proved.
Some conserved quantities are first studied in Section \ref{se31}.
The notions of {\em basic reproduction number} and {\em herd immunity domain} are then introduced in this general setting in Section \ref{se32}, based on the next-generation operator.
The limit behaviour is established in Section \ref{se33}, together with the properties of the solution of the final size equation, formally achieving the proof of Theorem~\ref{th1}.
Section \ref{se34} then exploits the tools introduced during this section to disentangle the link between herd immunity and equilibrium stability.

\subsection{Conserved quantities and long time limit}
\label{se31}

A central role in the dynamics is played by the integral quantity
\begin{equation}  \label{eq11}
    \Phi (t,x)= \int_\Omega \frac{\beta (x,y)}{\gamma(y)} [S(t,y)+E(t,y)+I(t,y)]\,dy,
\end{equation}
extending a similar and standard quantity for the SIR model.

\begin{lemma}\label{lem:conserv2}
  Under assumption \eqref{eq:hyp}, the solution of \eqref{eqG1} verifies that, for a.e.\ $x \in  \Omega$, the quantities 
  $S(t,x)+E(t,x)+I(t,x)+R(t,x)$ and $\ln S(t,x) - \Phi(t,x)$ are constant with respect to time.
\end{lemma}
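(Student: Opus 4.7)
The plan is a direct pointwise-in-$x$ computation; no heavy machinery is needed, because both quantities can be differentiated in $t$ using only the equations defining \eqref{eqG1} and a minimal amount of regularity/positivity.

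The first step is to settle the elementary preliminaries. Under \eqref{eq:hyp}--\eqref{eq:init}, the right-hand sides of \eqref{eqG1} are Lipschitz in $(S,E,I,R)$ uniformly in $x$, so a standard Picard argument on the Banach space $L^\infty_+(\Omega)^4$ yields a unique global mild/strong solution that stays nonnegative and is $C^1$ in $t$ for a.e.\ $x$. Moreover, since $J(t,x):=\int_\Omega \beta(x,y)I(t,y)\,dy\in L^\infty$, equation \eqref{eqG1a} integrates explicitly as
\begin{equation*}
S(t,x)=S_0(x)\exp\!\Big(-\!\int_0^t J(s,x)\,ds\Big),
\end{equation*}
so $S(t,x)>0$ for a.e.\ $x\in\Omega$ (since $S_0>0$), making $\ln S(t,x)$ well-defined and differentiable in $t$.

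For the first conserved quantity I simply add \eqref{eqG1a}--\eqref{eqG1d}: the $S\!\int\beta I$, $\alpha E$ and $\gamma I$ terms cancel pairwise, giving $\partial_t(S+E+I+R)(t,x)=0$ pointwise a.e.\ in $x$.

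For the second, I first compute $\partial_t \ln S(t,x)=\partial_t S/S=-J(t,x)$ using \eqref{eqG1a}. Then I differentiate $\Phi$ under the integral sign, which is legitimate because the integrand is bounded by $\beta(x,\cdot)/\underline\gamma$ times an $L^\infty$ function. Adding \eqref{eqG1a}--\eqref{eqG1b} yields the very simple identity $\partial_t(S+E+I)(t,y)=-\gamma(y)I(t,y)$, so
\begin{equation*}
\partial_t\Phi(t,x)=\int_\Omega\frac{\beta(x,y)}{\gamma(y)}\bigl(-\gamma(y)I(t,y)\bigr)\,dy=-\int_\Omega\beta(x,y)I(t,y)\,dy=-J(t,x).
\end{equation*}
Subtracting gives $\partial_t\bigl(\ln S(t,x)-\Phi(t,x)\bigr)=0$, proving the second conservation law.

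The only mild obstacle is ensuring the strict positivity of $S$ so that $\ln S$ makes sense; this is handled upfront via the explicit exponential formula above. Every other step is an algebraic cancellation directly dictated by \eqref{eqG1}, so I do not expect any genuine difficulty.
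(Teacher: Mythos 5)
Your proof is correct and follows essentially the same route as the paper: summing the equations for the first invariant, and observing that $\partial_t(S+E+I)=-\gamma I$ so that $\partial_t\Phi=-\int_\Omega\beta(x,y)I(t,y)\,dy=\partial_t\ln S$ for the second. The extra preliminaries on well-posedness, strict positivity of $S$, and differentiation under the integral sign are sound and merely make explicit what the paper leaves implicit.
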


\begin{proof}
  Firstly, adding the equations in \eqref{eqG1} provides directly the conservation of $S+E+I+R$ with respect to time. Secondly, we compute
  \begin{align*}
    \frac{\partial}{\partial t} [  \ln S(t,x) -\Phi (t,x)  ]\ 
    & = \frac{\dot S(t,x)}{S(t,x)} - \int_\Omega \frac{\beta (x,y)}{\gamma(y)} (\dot S(t,y) + \dot E(t,y) + \dot I(t,y))\,dy  \\
    & = - \int_\Omega \beta(x,y) I(t,y)\,dy\ + \int_\Omega \beta (x,y) I(t,y) =0.
  \end{align*}
We thus deduce the conservation relation
  \begin{equation} \label{Lyap3}
    \frac{\partial}{\partial t} [  \ln S(t,x) -\Phi (t,x)  ]=0,
  \end{equation}
 and Lemma~\ref{lem:conserv2} is proved.
\end{proof}

We now demonstrate the basic convergence properties stated in Theorem \ref{th1}.
 The exponential convergence, which necessitates more advanced tools, will be treated later, see Section~\ref{se33}.
 
\begin{lemma}\label{lem:lim}
  Assume that \eqref{eq:hyp} and \eqref{eq:init} hold.
  Then the solution of \eqref{eqG1} verifies~: there exists $S_\infty \in L^\infty_+(\Omega)$, $0 < S_\infty\leq S_0$, \textcolor{magenta}{and $R_\infty = S_0+E_0+I_0+R_0-S_\infty$}, such that the convergence properties stated in \eqref{eq18}  hold uniformly, for a.e.\ $x\in \Omega$, as well as \eqref{eq:Sinf}. 
\end{lemma}

\begin{proof}
  We deduce easily from \eqref{eqG1} that for a.e.\ $x\in\Omega$, $t\mapsto S(t,x)$ is decreasing.
  Being bounded from below by 0, it admits a limit as $t$ goes to $+\infty$.
  By the same token, $S(\cdot,x)+E(\cdot,x)$ and $S(\cdot,x)+E(\cdot,x)+I(\cdot,x)$ admit similarly a limit.
  As $S(\cdot,x)$ admits a limit, we deduce that {\color{black} $E(\cdot,x) = (S(\cdot,x)+E(\cdot,x)) -S(\cdot,x)$ and $I(\cdot,x) = (S(\cdot,x)+E(\cdot,x)+I(\cdot,x)) - (S(\cdot,x)+E(\cdot,x))$ converge as well} when $t\to +\infty$.
{\color{black} The convergence of $R$ then comes as a consequence of the first conservation property in Lemma \ref{lem:conserv2}. 
 
 Equation \eqref{eq:Sinf} and its equivalent form \eqref{eq45} follow from Lemma \ref{lem:conserv2}, allowing to deduce that $S_\infty>0$.
From the fact that $\displaystyle\lim_{t\to\infty} S(t,x)=S_\infty(x)$ for a.e.\ $x \in\Omega$, one deduces, by the dominated convergence theorem, that this convergence is indeed $L^1(\Omega)$. 
Then it follows from equation \eqref{eqG1a} by application of Arzela-Ascoli's theorem that the convergence of $S(t,x)$ towards $S_\infty(x)$ is uniform a.e.~over $\Omega$.

Let us now study the limits of $E$ and $I$.
Adding \eqref{eqG1a} and \eqref{eqG1b} yields
$$
\partial_t (S(t,x)+E(t,x)) +\alpha(x) (S(t,x)+E(t,x)) = \alpha(x) S(t,x),
$$
with $S(t,x) \to S_\infty(x)$ a.e.~in $\Omega$ uniformly.
Due to the fact that $\alpha(x) \geq \underline\alpha >0$, see \eqref{eq:hyp}, one obtains by integration that $S(t,x)+E(t,x) \to S_\infty(x)$ when $t\to +\infty$, a.e.~and uniformly on $\Omega$.
Therefore, $E(t,x)$ converges to 0.
Similarly,
$$
\partial_t (S(t,x)+E(t,x)+I(t,x)) +\gamma(x) (S(t,x)+E(t,x)+I(t,x)) = \gamma(x) (S(t,x)+E(t,x)).
$$
Appealing to the hypothesis (made in \eqref{eq:hyp}) that $\gamma(x) \geq \underline\gamma >0$, permits to show that $I(t,x)\to 0$ when $t\to +\infty$, a.e.~and uniformly on $\Omega$.}
This demonstrates \eqref{eq18} and achieves the proof of Lemma \ref{lem:lim}.
\end{proof}

\subsection{Herd immunity and basic reproduction number}
\label{se32}

Recall that \eqref{eq:hyp} and \eqref{eq:init} have been assumed. 
Following the idea in Ref.~\cite{DiekmannR0}, we define now a key notion.

\begin{definition}
For any {\color{black} $S(\cdot)\in L^1(\Omega) \cap L^\infty_+(\Omega)$,}
we call {\em next-generation operator} $K_{S(\cdot)}$ associated to \eqref{eqG1}, the operator defined over $L^2(\Omega)$ by:
\begin{equation}\label{def:Kt}
  K_{S(\cdot)}[\phi](x) = S(x) \int_\Omega \frac{\beta(x,y)}{\gamma(y)} \phi(y)\,dy.
\end{equation}
\end{definition}

Under assumptions \eqref{eq:hyp} and \eqref{eq:init}, we know that, for any $t\geq 0$, the solution $S(t,\cdot)$ of \eqref{eqG1} belongs to $L^\infty_+(\Omega)\cap  L^1(\Omega)$.
The operator $K_{S(t,\cdot)}$ is thus well-defined and it  is a positive and compact operator over $L^2(\Omega)$.
This permits to define its norm and its spectral radius in the usual way recalled now.
 
\begin{definition}
The {\em spectral radius} $r(K_{S(t, \cdot)})$ of $K_{S(t, \cdot)}$ is defined by
\[
  r(K_{S(t, \cdot)}) = \lim_{n\to +\infty} \|{K_{S(t, \cdot)}}^n\|^{1/n}.
\]
\end{definition}

As $\beta$ is irreducible thanks to \eqref{eq:hyp} and using Krein-Rutman theory\cite{Krein,Zeidler}, we can identify the spectral radius $r(K_{S(t, \cdot)})$ of $K_{S(t, \cdot)}$ with its principal eigenvalue.
The latter is simple, and defined as the unique eigenvalue associated to a positive eigenfunction.
{\color{black} In particular, under the assumptions of Theorem \ref{th1}, $r(K_{S(t, \cdot)})>0$ for any $t\geq 0$.}
We denote $\varphi_{S(t, \cdot)}$ this eigenfunction, so that by definition,
\[
  K_{S(t, \cdot)}[\varphi_{S(t, \cdot)}](x)=S(t,x) \int_\Omega \frac{\beta(x,y)}{\gamma(y)} \varphi_{S(t, \cdot)}(y)\,dy = r(K_{S(t, \cdot)}) \varphi_{S(t, \cdot)}(x).
\]
We  also know that $r(K_{S(t, \cdot)})=r(K_{S(t, \cdot)}^{\star})$, where $K_{S(t, \cdot)}^{\star}$ is the adjoint operator, defined by 
\[
\langle K_{S(t, \cdot)}^\star[\psi], \phi \rangle
= \langle \psi, K_{S(t, \cdot)}[\phi] \rangle
= \int_\Omega \psi(x) S(t,x) \int_\Omega \frac{\beta(x,y)}{\gamma(y)} \phi(y)\,dy\, dx
\]
that is
\begin{equation}  \label{def:Ktstar} 
  K_{S(t, \cdot)}^{\star}[\psi] (y):= \int_\Omega S(t,x)  \frac{\beta(x,y)}{\gamma(y)} \psi(x)\,dx.
\end{equation}
We let $\psi_{S(t, \cdot)}$ be the associated normalized adjoint eigenfunction,  defined by
\[
 K_{S(t, \cdot)}^\star [\psi_{S(t, \cdot)}] = r(K_{S(t, \cdot)}) \psi_{S(t, \cdot)},  \qquad \text{and} \qquad \int_\Omega \psi_{S(t, \cdot)}(x) S_{0}(x)dx=1.
\]

We are now in position to define for \eqref{eqG1} the key notions of  basic reproduction number and  herd immunity, based on the definition of next-generation operator in \eqref{def:Kt}.
\begin{definition}\label{def:R0}
Under assumptions \eqref{eq:hyp} and \eqref{eq:init}, we define:
  \begin{itemize}
  \item {\em basic reproduction number} $\calR_0$  the spectral radius of the operator $K_{S_0(\cdot)}$, i.e., $\calR_0 = r(K_{S_0(\cdot)})$;
  \item {\em herd immunity domain} the set of those elements $\bar S$ of $L^\infty_+(\Omega)\cap  L^1(\Omega)$ such that $r(K_{\bar S(\cdot)}) \leq 1$.
  \end{itemize}
\end{definition}
In contrast to the homogeneous SEIR system where herd immunity is reached for a uniquely defined proportion of the variable $S$, here the boundary of the herd immunity domain is a set of functions corresponding to various susceptible population distributions.
It is hard to determine in which point of the boundary the trajectory will enter this domain, but this necessarily takes place after a certain time, as stated now.

\begin{lemma}\label{lem:rKt}
  Assume that \eqref{eq:hyp} and \eqref{eq:init} hold, and that $r(K_{S_0(\cdot)})>1$.
  Then,  $t\mapsto r(K_{S(t, \cdot)})$ is decreasing and continuous with
\begin{equation}
\label{eq19}
  \lim_{t\to +\infty} r(K_{S(t, \cdot)}) < 1,
\end{equation}
and  there exists a unique $T_0>0$ such that $r(K_{S(T_0, \cdot)})=1$.
  As a consequence $S(t, \cdot)$ belongs to the herd immunity domain iff $t\geq T_0$.
\end{lemma}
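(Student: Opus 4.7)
The plan is to establish, in turn, strict monotonicity of $t \mapsto r(K_{S(t,\cdot)})$, its continuity, the strict bound~\eqref{eq19}, and then to conclude by the intermediate value theorem.

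For strict monotonicity, I first note that $E_0+I_0\not\equiv 0$ implies, via \eqref{eqG1c}--\eqref{eqG1b} and nonnegativity, that $I(t,\cdot)\not\equiv 0$ on $\Omega$ for every $t>0$. Integrating \eqref{eqG1a} gives the explicit formula
\[
  S(t_2,x) = S(t_1,x) \exp\Bigl(-\int_{t_1}^{t_2}\!\int_\Omega \beta(x,y) I(s,y)\,dy\,ds\Bigr),
\]
which, combined with $\beta>0$, yields $S(t_2,\cdot) < S(t_1,\cdot)$ a.e.\ whenever $0\le t_1<t_2$. Since $\beta>0$ and $S(t,\cdot)>0$ a.e., the operator $K_{S(t,\cdot)}$ is strongly positive (hence irreducible) and compact on $L^2(\Omega)$, so the Krein--Rutman comparison principle gives $r(K_{S(t_2,\cdot)}) < r(K_{S(t_1,\cdot)})$. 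Continuity follows from the Hilbert--Schmidt estimate $\|K_{S_1}-K_{S_2}\|_{L(L^2(\Omega))} \le \|S_1-S_2\|_{L^\infty(\Omega)}\,\|\beta/\gamma\|_{L^2(\Omega\times\Omega)}$, the Lipschitz continuity of $t\mapsto S(t,\cdot)$ in $L^\infty(\Omega)$ (read off \eqref{eqG1a} using $S\le S_0$ and $\|I(t,\cdot)\|_{L^1}\le 1$), and the standard stability of the simple principal eigenvalue under compact-operator perturbations.

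The main obstacle is the strict bound $r(K_{S_\infty})<1$. Dominated convergence together with the above Hilbert--Schmidt estimate give $K_{S(t,\cdot)}\to K_{S_\infty}$ in operator norm, hence $r(K_{S(t,\cdot)})\to r(K_{S_\infty})$, so it suffices to rule out $r(K_{S_\infty})\ge 1$. For this, let $\psi_\infty > 0$ denote the adjoint principal eigenfunction of $K_{S_\infty}^{\star}$ from \eqref{def:Ktstar}, which satisfies $\int_\Omega \psi_\infty(x) S_\infty(x) \beta(x,y)\,dx = r(K_{S_\infty})\gamma(y)\psi_\infty(y)$. A direct computation using \eqref{eqG1c} and \eqref{eqG1b} gives
\[
  \frac{d}{dt}\int_\Omega \psi_\infty(x)\bigl[E(t,x)+I(t,x)\bigr]\,dx = \int_\Omega I(t,y)\Bigl[\int_\Omega \psi_\infty(x) S(t,x) \beta(x,y)\,dx - \gamma(y)\psi_\infty(y)\Bigr] dy,
\]
and since $S(t,\cdot)\ge S_\infty$ and $\beta,\psi_\infty>0$, the inner bracket is bounded below by $(r(K_{S_\infty})-1)\gamma(y)\psi_\infty(y)$. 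If $r(K_{S_\infty})\ge 1$, this derivative is nonnegative, so $t\mapsto \int_\Omega \psi_\infty(E+I)\,dx$ is nondecreasing, contradicting its convergence to $0$ (by \eqref{eq18} and dominated convergence) starting from the strictly positive value $\int_\Omega \psi_\infty(E_0+I_0)\,dx>0$.

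Combining these three ingredients, the intermediate value theorem applied to the continuous and strictly decreasing function $t\mapsto r(K_{S(t,\cdot)})$, with $r(K_{S(0,\cdot)})=\calR_0>1$ and $\lim_{t\to+\infty} r(K_{S(t,\cdot)})<1$, yields a unique $T_0>0$ such that $r(K_{S(T_0,\cdot)})=1$. Strict monotonicity then gives the equivalence $r(K_{S(t,\cdot)})\le 1 \iff t\ge T_0$, which is the desired characterization of the herd immunity domain.
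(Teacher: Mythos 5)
Your proof is correct and follows essentially the same strategy as the paper: the key step, ruling out $r(K_{S_\infty(\cdot)})\geq 1$ by testing $E+I$ against the adjoint principal eigenfunction $\psi_\infty$ and contradicting the convergence of $\int_\Omega \psi_\infty(E+I)\,dx$ to zero, is identical to the paper's argument. Your treatment of strict monotonicity and continuity (explicit exponential formula for $S$, Krein--Rutman comparison of kernels, Hilbert--Schmidt perturbation bound) is more detailed than the paper's terse appeal to the Gelfand formula, but this is a refinement of the same route rather than a different one.
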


\begin{proof}
We first establish that $t\mapsto r(K_{S(t, \cdot)})$ is non-increasing and continuous.
  This is a consequence of the fact that for a.e.\ $x\in\Omega$, $t\mapsto S(t,x)$ is decreasing and continuous. Indeed, by definition \eqref{def:Kt}, for any $\phi\in L^2(\Omega)$, and any $n\in \NN^*$, $t\mapsto \|{K_{S(t, \cdot)}}^n [\phi]\|$ is decreasing and continuous.
  Hence, $t\mapsto \|{K_{S(t, \cdot)}}^n\|^{1/n}$ is decreasing and continuous for any $n\in \NN^*$. The result follows by passing to the limit $n\to +\infty$.

  Secondly, since $r(K_{S_0(\cdot)})>1$, we are left to prove that \eqref{eq19} holds.
  From Lemma~\ref{lem:lim}, we may use the fact that $S_{\infty}(x)$ is the limit of $S(t,x)$ when $t\to +\infty$, for a.e.\ $x\in \Omega$.
Denote simply $K_{\infty} := K_{S_\infty(\cdot)}$ the associated next-generation operator, and by $\psi_{\infty}$ the principal eigenfunction associated to the adjoint operator $K_{\infty}^\star$, {\color{black} that is, $K_{\infty}^\star[\psi_\infty] = r(K_{\infty}^\star)\psi_\infty$. It follows from this identity that $r(K_{\infty}^\star)>0$.
Moreover one has $\psi_\infty>0$.
In addition, if \eqref{eq888} holds, then
$$\psi_\infty (x) \geq \frac{\underline{\beta}\ \ess\inf_\Omega S_\infty }{r(K_{\infty}^\star)\ess\sup_\Omega \gamma}\int_\Omega \psi_\infty (y)dy >0,$$
and in this second case, $\ess\inf_\Omega\psi_{\infty}>0$.}

%
  
 Adding the second and third equations in (\ref{eqG1}), multiplying  the result by $\psi_{\infty}$  and integrating, one obtains from \eqref{def:Ktstar} taken at the limit,
  \begin{align*}
      \ds \frac{d}{dt} \int_\Omega &\psi_{\infty}(x)\big( E(t,x)+I(t,x)\big)\,dx \\
      &= \int_{\Omega \times\Omega} \psi_{\infty}(x)S(t,x) \beta (x,y) I(t,y)\,dydx - \int_\Omega \gamma (x) \psi_{\infty}(x)I(t,x)\,dx \\
      &\geq   \int_{\Omega \times\Omega} \psi_{\infty}(x)S_{\infty}(x) \beta (x,y) I(t,y)dydx - \int_\Omega \gamma (x) \psi_{\infty}(x)I(t,x)\,dx \\[5pt]
      &= \langle K_\infty^\star[\psi_\infty], \gamma (\cdot) I(t,\cdot) \rangle - \langle \psi_\infty, \gamma (\cdot) I(t,\cdot) \rangle \\
      &= \big( r(K_{\infty}^\star)-1\big)\int_{\Omega} \psi_{\infty}(y)  \gamma (y)  I(t,y)\,dy.
  \end{align*}
  Hence, arguing by contradiction,  if $r(K^{\star}_{\infty})\geq 1$, then, on the one hand $t\mapsto \int_{x} \psi_{\infty}(x)\big( E(t,x)+I(t,x)\big)dx$ is non-decreasing. 
  On the other hand, we have seen in Lemma \ref{lem:lim} that $\lim_{t\to +\infty} \big( E(t,x)+I(t,x)\big) = 0$. It is a contradiction. Thus, $r(K^{\star}_{\infty})<1$ and by continuity there exists $T_0$ such that $r(K_{S(T_0, \cdot)})=1$.
This achieves the proof of Lemma~\ref{lem:rKt}.
\end{proof}

As is the case for the homogenous SEIR model, there exists a tight relation between herd immunity and stability of the equilibrium points of \eqref{eqG1}.
This link is elucidated in Section \ref{se34} below.

\subsection{Long time behaviour and final size equation -- Proof of Theorem \ref{th1}}
\label{se33}

{\color{black} We are now in position to achieve the proof of Theorem \ref{th1}.}

\paragraph{\it \color{black} Step 1.\ Preliminaries.}

In Lemma \ref{lem:lim} we already proved the decay of $t\mapsto S(t,x)$ and the existence of limits for all variables {\color{black} $S,E, I$ and $R$.
Basically we have still to achieve two tasks: identifying the limit $S_\infty$, and showing the exponential convergence in the case where \eqref{eq888} holds.}
Integrating \eqref{Lyap3} between $t$ and $+\infty$, we obtain that the latter satisfies necessarily
\begin{equation}
\label{eq38}
  \left\{
    \begin{array}{l}
      \ds \ln S_\infty(x) -  \Phi_\infty (x) = \ln S(t,x) -  \int_\Omega \frac{\beta (x,y)}{\gamma(y)} [S(t,y)+E(t,y)+I(t,y)]\,dy,
      \\[3mm]
      \ds \Phi_\infty (x) =  \int_\Omega \frac{\beta (x,y)}{\gamma(y)} S_\infty(y)\,dy,
    \end{array}\right.
\end{equation}
for any $t\geq 0$.
{\color{black} Taking $t=0$, yields an alternative form of \eqref{eq:Sinf}/\eqref{eq45} that will be useful in the sequel.}

\paragraph{\it Step 2. Exponential convergence properties.}

 {\color{black} We assume in this paragraph that \eqref{eq888} holds.
 From \eqref{eq45}, it is deduced directly that $\ess\inf_\Omega S_\infty>0$.

From the uniform convergence of $S(t,\cdot)$ towards $S_\infty(\cdot)$ previously ascertained in Lemma \ref{lem:lim}, one deduces that,}
for all  $\varepsilon >0$, there is a $T_\varepsilon $ such that $S(t,\cdot) \leq (1+ \varepsilon ) S_\infty $ for $t\geq T_\varepsilon $.
We also introduce $\theta \in (0,1)$, to be chosen later, and compute 
\begin{equation}
\label{eq88}
\frac{d}{dt} [E(t,x)+ \theta I(t,x)]\leq (1+ \varepsilon ) S_\infty (x) \int_\Omega  \beta(x,y) I(t,y)\ dy - (1-\theta) \underline \alpha E(t,x) - \theta  \gamma(x) I(t,x),
\end{equation}
where $\underline\alpha>0$ has been defined in \eqref{eq:hyp}.
Multiplying again by $\psi_\infty$, the principal eigenfunction associated to the adjoint operator $K_\infty^{\star}=K_{S_\infty(\cdot)}^{\star}$ introduced in the proof of Lemma \ref{lem:rKt}, and using~\eqref{def:Ktstar}, we find for $t\geq T_\varepsilon $
\begin{eqnarray*}
\lefteqn{\frac{d}{dt} \int_\Omega \psi_\infty (x) [E(t,x)+ \theta I(t,x) ]dx}\\
& \leq &
(1+ \varepsilon )  \int_{\Omega \times \Omega} \psi_\infty(x) S_\infty (x)  \beta(x,y) I(t,y)\ dydx 
-    \int_\Omega \psi_\infty (x) [(1-\theta)\underline \alpha E(t,x) + \theta \gamma (x) I(t,x) ] dx\\
& \leq &
(1+ \varepsilon ) r(K_\infty)  \int_\Omega  \psi_\infty(y) \gamma(y)  I(t,y)\ dy -   \int_\Omega \psi_\infty(x) [(1-\theta)\underline \alpha E(t,x) + \theta  \gamma (x) I(t,x) ]\,dx.
\end{eqnarray*}

Since we know from Lemma \ref{lem:rKt} that $r(K_\infty) <1$, {\color{black} one may} choose $\theta $ sufficiently close to $1$ and $\varepsilon $ small enough such that $1 > \theta > (1+ \varepsilon ) r(K_\infty)$, in such a way that
\[
\lambda:= \min \left( (1-\theta)\underline \alpha, \left (1 - \frac{ (1+ \varepsilon) r(K_\infty) }{\theta} \right)\underline \gamma \right) >0.
\]
We {\color{black} then} obtain 
\[
\frac{d}{dt}  \int_\Omega \psi_\infty (x) [E(t,x)+ \theta I(t,x) ] dx  \leq - \lambda  \int_\Omega \psi_\infty (x) [E(t,x)+ \theta I(t,x)] dx,
\]
and, applying Gronwall lemma, we find exponential decay to $0$ with rate  $\lambda$ for {\color{black} both positive integrals $\displaystyle\int_\Omega \psi_\infty (x) E(t,x) dx$ and $\int_\Omega \psi_\infty (x) I(t,x) dx$}.

{\color{black}
Now, as $\ess\inf_\Omega \psi_\infty>0$ (see the proof of Lemma \ref{lem:rKt}), one deduces that both $\int_\Omega E(t,x)dx$ and $\int_\Omega I(t,x)dx$ converge exponentially to $0$. It follows from (\ref{eqG1c}) that $E(t,x)$ converges exponentially to $0$ as $t\to +\infty$ for a.e. $x\in \Omega$. Similarly, one gets  from (\ref{eqG1b}) the exponential convergence of $I(t,x)$.

Last,} using the fact that $S(t,x)$ decreases along time, equation \eqref{eqG1a}
tells us that 
\begin{align*}
0 \leq S(t,x) - S_\infty(x) &= \int_t^\infty  S(s, x) \int_\Omega \beta(x,y) I(s,y)  \,dyds \\
 &\leq \frac{S^0(x) \| \beta \|_{L^\infty}}{\inf \psi_\infty} {\color{black} \int_t^{\infty}} \int_\Omega \psi_\infty (y) I(s,y)  \,dy ds  =O(e^{-\lambda t}).
\end{align*}

\paragraph{\it Step 3. Contraction property for $t$ large enough \color{black} and local uniqueness of the solution of \eqref{eq:Sinf}.}

From Lemma \ref{lem:rKt}, there exists $T>0$ such that $r(K_{S(T, \cdot)})<1$.
From the first line of equation \eqref{eq38}, we deduce the expression for $S_\infty$
\begin{equation}
\label{eq:Sinf2}
  S_\infty(x) = S(T,x) \exp (\Phi_\infty(x)-\Phi(T,x)),
\end{equation}
where we use the notation  \eqref{eq11}.
Injecting this expression into the second line of \eqref{eq38}, yields the following fixed point problem for the function $\Phi_\infty$~:
  \begin{equation}
    \label{eq:phiinf}
    \Phi_\infty(x) = \int_\Omega \frac{\beta (x,y)}{\gamma(y)} S(T,y) \exp(\Phi_\infty(y)-\Phi(T,y))\,dy.
  \end{equation}
  Notice that since $t\mapsto S(t,x)$ is decreasing from \eqref{eqG1a}, we deduce that $S_\infty\leq S(T,\cdot)$ and $\Phi_\infty \leq \Phi(T,\cdot)$.
  
Introduce
  \[
    \mathcal{F}(u)(x) := \int_\Omega \frac{\beta (x,y)}{\gamma(y)} S(T,y) \exp(u(y)-\Phi(T,y))\,dy,
  \]
in such a way that the fixed point problem \eqref{eq:phiinf} reads $\Phi_\infty = \mathcal{F}(\Phi_\infty)$.
  Let us denote 
  \[
    \mathcal{K}:=\{u\in L^1(\Omega), \text{ such that } 0\leq u \leq \Phi(T,\cdot) \hbox{ a.e.}\}.
  \]
  Clearly, $\mathcal{K}$ is stable by $\mathcal{F}$ (using the definition of $\Phi$ in \eqref{eq11}), and we may prove that $\mathcal{F}$ is a contraction on the set $\mathcal{K}$.
  Indeed, we compute, for any $u$, $v$ in $\mathcal{K}$,
  \begin{align}
    |\mathcal{F}(u) - \mathcal{F}(v)|(x)
    & \nonumber
\color{black}
\leq \int_\Omega \frac{\beta (x,y)}{\gamma(y)} S(T,y) \left|\exp(u(y)-\Phi(T,y))-\exp(v(y)-\Phi(T,y))\right|\,dy   \\
    & \label{eq53}
    \leq \int_\Omega \frac{\beta (x,y)}{\gamma(y)} S(T,y) |u(y)-v(y)|\,dy.
  \end{align}
  {\color{black}
To deduce the last inequality, we used the fact that $u(y)-\Phi(T,y)\leq 0$, $v(y)-\Phi(T,y)\leq 0$ for almost every $y\in \Omega$ (as a consequence of the fact that $u,v\in\mathcal{K}$ and by definition of this set) and applied the mean value theorem.}
  
  At this stage, we introduce on the set $\mathcal{K}$ the norm 
  $$
   \|u\|_T:= \int_\Omega \psi_{S(T, \cdot)}(x) S(T,x) |u(x)|\,dx,
  $$
where, as defined in~\eqref{def:Ktstar}, $\psi_{S(T, \cdot)}$ is the principal eigenfunction of the adjoint $K_{S(T, \cdot)}^\star$.
(Notice that, when $\ess\inf_\Omega\psi_{S(T,\cdot)}>0$ by the same arguments as in Lemma \ref{lem:rKt}, the norm $\|\cdot\|_T$ is equivalent to $L^\infty$ norm, and $({\cal K}, \|\cdot\|_T)$ is a Banach space; but this property will not be necessary in the sequel.)
Multiplying by $\psi_{S(T, \cdot)}$ and $S(T,\cdot)$, we get with the help of \eqref{eq53}, that
  \begin{eqnarray*}
\|\mathcal{F}(u) - \mathcal{F}(v)\|_T
      & = &
    \int_\Omega \psi_{S(T, \cdot)}(x) S(T,x) |\mathcal{F}(u)(x) - \mathcal{F}(v)(x)|\,dx\\
    & \leq &
     \int_{\Omega\times\Omega} \psi_{S(T, \cdot)} (x) \frac{\beta (x,y)}{\gamma(y)} S(T,x) S(T,y) |u(y)-v(y)|\,dydx \\ 
     & = &
     r(K_{S(T, \cdot)}) \int_\Omega \psi_{S(T, \cdot)} (y) S(T,y) |u-v|(y)dy = r(K_{S(t, \cdot)}) \|u-v\|_T,
  \end{eqnarray*}
  by definition 
  of the norm~$\|\cdot\|_T$.
  As $r(K_{S(T, \cdot)})<1$, we deduce that $\mathcal{F}$ is a contraction on $\mathcal{K}$.
  {\color{black}
The existence of the limit $S_\infty$ has been established previously (see Lemma \ref{lem:lim}), together with the fact that it belongs to the set $\cal K$.
The contraction inequality now shows that $\Phi_\infty$ is the unique solution to \eqref{eq:phiinf} in the set $\mathcal{K}$, from which $S_\infty$ is deduced, via \eqref{eq:Sinf2}.
}

\paragraph{\it Step 4. Uniqueness below $S_0$.}
 It remains to extend the uniqueness result to all function smaller or equal to $S_0$.
 We make use of the following result.

 \begin{lemma} 
 \label{le7}
 \color{black} The solution $S_\infty$ is larger than all sub-solutions of \eqref{eq:Sinf}.
 \end{lemma}

 Indeed, we deduce from Lemma \ref{le7} that any other solution $\underline S$ of \eqref{eq:Sinf} satisfies $\underline S \leq S_\infty$.
 Therefore it also satisfies $\underline S \leq S(T)$ for $T$ large enough.
 Thus by the contraction property of Step 3 it is equal to $S_\infty$.
This achieves the demonstration of Theorem \ref{th1}, with the exception of the proof of Lemma \ref{le7}, which we now complete.

 \begin{proof}[Proof of Lemma \ref{le7}]
 We first show that $S_\infty$ is the limit of the sequence $S_k(x)$ departing from the initial condition $S_0$ and constructed, for $k \geq 0$, by the following iteration:
 \begin{align*}
   &\ln S_{k+1}(x) = \int_{\Omega} \frac{ \beta (x,y) }{\gamma(y)} S_{k}(y)\,dy +A(x), \\
   & A(x)= \ln S_0(x) -  \int_{\Omega} \frac{ \beta (x,y) }{\gamma(y)} [S_{0} +E_{0}+I_{0}](y)\,dy.
 \end{align*}
 Obviously, we have
   \[
     \ln S_{1}(x) = \int_{\Omega} \frac{ \beta (x,y) }{\gamma(y)} S_{0}(y)\,dy + A(x)
     = \ln S_{0}(x) -  \int_{\Omega} \frac{ \beta (x,y) }{\gamma(y)} [E_{0}+I_{0}](y)\,dy
     < \ln S_{0}(x).
   \]
   And iterating, if we know that $S_k < S_{k-1}$, we find 
   \[
     \ln S_{k+1}(x) = \int_{\Omega} \frac{ \beta (x,y) }{\gamma(y)} S_{k}(y)\,dy + A(x) < \int_{\Omega} \frac{ \beta (x,y) }{\gamma(y)} S_{k-1}(y)\,dy + A(x) =\ln S_{k}(x) ,
   \]
   which gives $S_{k+1}<S_{k}$. As a consequence $S_k$ is a decreasing sequence which converges to a solution  $\bar S$ of \eqref{eq:Sinf}.

   Secondly, let $\underline S\leq S_0$ be a sub-solution of \eqref{eq:Sinf}. We have 
   \[
     \ln \underline S(x) \leq  \int_{\Omega} \frac{ \beta (x,y) }{\gamma(y)} \underline S(y)\,dy + A(x) \leq  \int_{\Omega} \frac{ \beta (x,y) }{\gamma(y)} S_{0}(y)\,dy + A(x) = \ln S_1(x),
   \]
   therefore $\underline S\leq S_1$, and iterating the argument $\underline S\leq S_k$ for all $k \in \mathbb N$ and thus $\underline S\leq \bar S$. In particular we have $S_\infty \leq \bar S$.

   Thirdly, we prove that $ \bar S  \leq  S_\infty$. We know that
   $\bar S < S_{0}$ and we are going to prove that $\bar S < S(t)$ for all $t \geq0$ from which we conclude, by definition of $S_\infty$, that $\bar S \leq S_\infty$ and thus that $S_\infty = \bar S$. To prove this, we consider, by contradiction, the first time $t_0$ when   $S(t_0)$ touches  $\bar S $, i.e., $\bar S  \leq  S(t_0)$ and for some $x_0 \in \Omega$, $\bar S(x_0)  =  S(t_0,x_0)$.
Using the fact, proved in Lemma \ref{lem:conserv2}, that the quantity $\ln S(t,x)-\Phi(t, x)$ is constant with respect to time, we conclude that
   \[
     0= \ln  \bar S(x_0) - \ln S(t_0,x_0) = \int_{\Omega} \frac{ \beta (x,y) }{\gamma(y)} [ \bar S(y) - S(t_0, y)- E(t_0, y)-I(t_0,y)]\,dy <0,
   \]
   a contradiction.
   This achieves the demonstration of Lemma \ref{le7}.
 \end{proof}

\subsection{Herd immunity and stability of the equilibrium points}
\label{se34}

Before closing Section \ref{sec:gen} and the study of system \eqref{eqG1}, we analyze in the following result the link between herd immunity and the stability of the equilibria of this system.

\begin{lemma}
\label{le777}
Assume \eqref{eq:hyp} and \eqref{eq:init} hold.
  The equilibrium points of system \eqref{eqG1} are exactly the configurations $(S^*,0,0,R^*)$ that fulfil \eqref{eq:init}.
Moreover each of them is stable if
  $
  r(K_{S^*(\cdot) }) < 1,
  $
  unstable if
  $
    r(K_{S^*(\cdot) }) > 1,
  $
\end{lemma}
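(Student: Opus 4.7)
The plan is to characterize the equilibria by direct inspection of \eqref{eqG1} and then to obtain the stability dichotomy by mirroring the moving-weight argument of Step 2 in the proof of Theorem \ref{th1}, with the sign of the Lyapunov functional flipped in the unstable case. For the equilibria, setting the time derivatives to zero, \eqref{eqG1d} together with $\underline\gamma>0$ from \eqref{eq:hyp} forces $I\equiv 0$; \eqref{eqG1b} with $\underline\alpha>0$ then forces $E\equiv 0$; equations \eqref{eqG1a} and \eqref{eqG1c} are automatically satisfied for any $(S^*, R^*)\in L^\infty_+(\Omega)^2$, and imposing the normalization from \eqref{eq:init} gives the stated form.

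For the stability analysis, the components $S$ and $R$ are slaved to $(E, I)$ through \eqref{eqG1a} and \eqref{eqG1d}, so it suffices to control the $(E, I)$ subsystem. Let $\psi^* := \psi_{S^*(\cdot)}$ denote the adjoint principal eigenfunction from \eqref{def:Ktstar}, pick $\theta > 0$ to be specified later, and consider
\[
V(t) := \int_\Omega \psi^*(x)\big(E(t,x) + \theta I(t,x)\big)\,dx.
\]
Writing $S = S^* + (S - S^*)$ and using the eigenvalue identity $K_{S^*(\cdot)}^\star[\psi^*] = r(K_{S^*(\cdot)})\psi^*$, the same computation as in Step 2 of the proof of Theorem \ref{th1} gives
\[
\frac{dV}{dt}(t) = \big(r(K_{S^*(\cdot)}) - \theta\big)\int_\Omega \gamma\psi^* I\,dx - (1-\theta)\int_\Omega \alpha\psi^* E\,dx + \calE(t),
\]
where $\calE(t) := \int_{\Omega \times \Omega} \psi^*(x)(S(t,x) - S^*(x))\beta(x,y) I(t,y)\,dy\,dx$ is quadratic in the perturbation.

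In the stable case $r(K_{S^*(\cdot)}) < 1$, I would pick $\theta \in (r(K_{S^*(\cdot)}), 1)$ so that the two principal terms combine into a bound $-\lambda V$ with $\lambda := \min\big((1-\theta)\underline\alpha, (1 - r(K_{S^*(\cdot)})/\theta)\underline\gamma\big) > 0$. The cross term $\calE(t)$ is absorbed by a continuity-in-time bootstrap exploiting the monotone decay of $S$, which keeps $\|S(t) - S^*\|_{L^\infty}$ small for small initial data; Gronwall then yields exponential decay of $V$, hence of $E$ and $I$, and integrating \eqref{eqG1a} and \eqref{eqG1d} in time gives corresponding control of $\|S - S^*\|$ and $\|R - R^*\|$. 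In the unstable case $r(K_{S^*(\cdot)}) > 1$, I would pick $\theta \in (1, r(K_{S^*(\cdot)}))$ so that both coefficients become strictly positive, yielding $\dot V \geq \lambda V - |\calE(t)|$; starting from an initial perturbation with $(E_0, I_0)$ proportional to the Krein--Rutman positive eigenfunction $\varphi_{S^*(\cdot)}$ ensures $V(0) > 0$, and the exponential growth of $V$ forces the trajectory out of any fixed neighborhood. The main obstacle will be the coupled bookkeeping of $\|S(t) - S^*\|_{L^\infty}$ and $V(t)$ needed to absorb $\calE(t)$, together with pinning down a precise topology on the infinite-dimensional phase space in which (in)stability is to be understood.
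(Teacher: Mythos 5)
Your characterization of the equilibria and your treatment of the stable case follow the paper's own line: the same weighted functional $V(t)=\int_\Omega\psi_{S^*(\cdot)}(x)\big(E(t,x)+\theta I(t,x)\big)\,dx$ with $\theta\in\big(r(K_{S^*(\cdot)}),1\big)$, combined with the adjoint eigenvalue identity, produces the decay. The genuine differences are two. First, the paper works throughout with the linearized system \eqref{eq98}, in which $S$ is frozen at $S^*$ in the infection term, so your cross term $\calE(t)$ never appears; it then invokes the monotonicity of the $(e,i)$ subsystem (to reduce to sign-definite perturbations) and the block-triangular structure to conclude simple stability of the full linearization. You instead keep the nonlinear system and absorb $\calE$ by a bootstrap on $\|S(t)-S^*\|_{L^\infty}$; this costs extra bookkeeping (and needs $\inf\psi_{S^*(\cdot)}>0$ to dominate $\int_\Omega I$ by $V$, a fact the paper also uses implicitly in Step 2 of the proof of Theorem \ref{th1}), but if completed it yields genuinely nonlinear stability rather than linearized stability. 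Second, for $r(K_{S^*(\cdot)})>1$ the mechanisms differ: you run a Chetaev-type argument with $\theta\in\big(1,r(K_{S^*(\cdot)})\big)$, getting $\dot V\geq\lambda V-|\calE|$ and concluding from exponential growth of $V$ against its boundedness on a small neighborhood; the paper instead builds an explicit point $(e^*,i^*)$ from the \emph{direct} principal eigenfunction $\varphi_{S^*(\cdot)}$ at which the linearized vector field is strictly positive, and uses the comparison principle for the monotone $(e,i)$ subsystem to show that trajectories starting above a multiple of $(e^*,i^*)$ increase. Both are sound; note that in your unstable case the smallness of $\|S(t)-S^*\|_{L^\infty}$ needed to absorb $\calE$ is exactly the standing hypothesis of the contradiction (the trajectory is assumed to remain in a small neighborhood of $(S^*,0,0,R^*)$), so that part of the bookkeeping does close.
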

\begin{proof}
The equilibria for \eqref{eqG1} are any configuration $(S^*,E^*,I^*,R^*)$ such that, for a.e.\ $x\in\Omega$,
\begin{gather*}
-S^*(x) \int_\Omega \beta(x,y) I^*(y)\ dy = 0,\qquad S^*(x) \int_\Omega  \beta(x,y) I^*(y)\ dy - \alpha(x) E^*(x) = 0,\\
\alpha (x) E^*(x) - \gamma(x) I^*(x) = 0,\qquad \gamma(x) I^*(x)=0.
\end{gather*}
We deduce from the last two equations that $(E^*,I^*) = (0,0)$.
This condition is also sufficient to have an equilibrium, provided that it fulfils \eqref{eq:init}.

The stability of such an equilibrium is related to the stability of the linearized system
\begin{subequations}
\label{eq98}
\begin{gather}
\dfrac{\partial s}{\partial t} = -S^*(x) \int_\Omega \beta(x,y) i(t,y)\ dy,\quad
\dfrac{\partial e}{\partial t}  = S^*(x) \int_\Omega \beta(x,y) i(t,y)\ dy - \alpha(x) e(t,x),\\
  \dfrac{\partial i}{\partial t}  = \alpha(x) e(t,x) - \gamma(x) i(t,x),\qquad
    \dfrac{\partial r}{\partial t}  =  \gamma(x) i(t,x).
\end{gather}
\end{subequations}
Notice that \eqref{eq98} admits a block-triangular structure: the evolution of $(e,i)$ is independent of the other two components $s$ and $r$; while the evolution of $s$ and $r$ only depends upon $i$.

Assume first that $r(K_{S^*(\cdot)})<1$.
Let $\psi_{S^*(\cdot)}$ be the principal adjoint eigenvector of the next-generation operator $K_{S^*(\cdot)}$ associated to $S^*$.
Mimicking the computation achieved in Step 2 of Section \ref{se33}, one finds that, for any $\theta\in (0,1)$ and a.e.\ $x\in\Omega$,
\begin{eqnarray*}
\lefteqn{\frac{d}{dt} \int_\Omega \psi_{S^*(\cdot)}(x) [e(t,x)+\theta i(t,x)]\ dx}\\
& = &
\int_{\Omega\times\Omega} \psi_{S^*(\cdot)}(x) S^*(x)\beta(x,y) i(t,y)\ dy  \\
&&\qquad\qquad -  \int_\Omega \psi_{S^*(\cdot)}(x) [(1-\theta) \alpha(x) e (t,x) + \theta  \gamma (x) i(t,x) ]\,dx \\
& \leq &
r(K_{S^*(\cdot)})  \int_\Omega  \psi_{S^*(\cdot)}(y) \gamma(y) i(t,y)\ dy\\
&&\qquad\qquad 
- \int_\Omega \psi_{S^*(\cdot)}(x) [(1-\theta)\underline \alpha e(t,x) + \theta  \gamma (x) i(t,x) ]\,dx.
\end{eqnarray*}
Taking $\theta \in (r(K_{S^*(\cdot)}),1)$ then yields (through application of Gronwall's lemma) exponential decrease of $\int_\Omega \psi_{S^*(\cdot)}(x) [e(t,x)+\theta i(t,x)]\ dx$ when $t\to +\infty$.
One sees easily that the subsystem in $(e,i)$ of \eqref{eq98} is a monotone system.
The eigenfunction $\psi_{S^*(\cdot)}$ being positive, the functional $(e(\cdot),i(\cdot))\mapsto \int_\Omega \psi_{S^*(\cdot)}(x) [e(\cdot)+\theta i(t\cdot)]\ dx$ is thus a Lyapunov functional for this subsystem, and the origin of the latter is asymptotically stable.
Due to the block-triangular structure mentioned earlier,  the complete system \eqref{eq98} is (simply) stable.

Assume now that $r(K_{S^*(\cdot)})>1$.
Denoting $\varphi_{S^*(\cdot)}$ the principal eigenvector of $K_{S^*(\cdot)}$, define
$$
e^*(x) := \frac{r(K_{S^*(\cdot)})+1}{2}\frac{\varphi_{S^*(\cdot)}(x)}{\alpha(x)},\qquad
i^*(x) := \frac{\varphi_{S^*(\cdot)}(x)}{\gamma(x)}.
$$
Let us evaluate the value of the right-hand sides corresponding to $\frac{\partial e}{\partial t}$ and $\frac{\partial i}{\partial t}$ at the point $(e^*,i^*)$.
Then, for any $x\in\Omega$,
\begin{eqnarray*}
\lefteqn{S^*(x) \int_\Omega \beta(x,y) i^*(y)\ dy - \alpha(x) e^*(x)}\\
& = &
S^*(x) \int_\Omega \beta(x,y) \frac{\varphi_{S^*(\cdot)}(y)}{\gamma(y)}\ dy - \frac{r(K_{S^*(\cdot)})+1}{2} \varphi_{S^*(\cdot)}(x)  \\
& = & K_{S^*(\cdot)}[\varphi_{S^*(\cdot)}](x) - \frac{r(K_{S^*(\cdot)})+1}{2} \varphi_{S^*(\cdot)}(x)\\
& = &
\frac{r(K_{S^*(\cdot)})-1}{2} \varphi_{S^*(\cdot)}(x)
> \frac{r(K_{S^*(\cdot)})-1}{r(K_{S^*(\cdot)})+1} \underline\alpha e^*(x),
\end{eqnarray*}
while similarly 
\begin{eqnarray*}
\alpha(x) e^*(x) - \gamma(x) i^*(x)
& = &
\frac{r(K_{S^*(\cdot)})+1}{2} \varphi_{S^*(\cdot)}(x) - \varphi_{S^*(\cdot)}(x)\\
& = &
\frac{r(K_{S^*(\cdot)})-1}{2} \varphi_{S^*(\cdot)}(x)
> \frac{r(K_{S^*(\cdot)})-1}{2}\underline\gamma i^*(x).
\end{eqnarray*}
The subsystem describing the evolution of the components $(e,i)$ being a monotone system, this condition is sufficient to ensure that any trajectory of this subsystem departing from a point $(e_0,i_0) \geq \lambda (e^*,i^*)$, $\lambda>0$, is increasing.
This shows the instability of the origin of \eqref{eq98}, and achieves the proof of Lemma \ref{le777}.
\end{proof}

\section{General contact matrices with diffusion}
\label{sec:genDiff}

We now come to system~\eqref{eqGLaplacian} where diffusion is included, and prove Theorem~\ref{th2}. We adapt for this the methodology developed in Section~\ref{sec:gen}.

\subsection{Conserved quantities and long time limit}
\label{se41}

Let us introduce the solution of the nonlinear elliptic equation
\begin{equation}\label{eq:Phi}
  \left\{
\begin{array}{l}
-\Delta \Phi(t,x) +\gamma \Phi(t,x) = S(t,x)+E(t,x)+I(t,x), \qquad \text{on } \Omega,
\\[2pt]
 \partial_{n}\Phi(t,x) = 0 \quad \hbox{ on }  (0,\infty)\times \partial\Omega,
 \end{array} \right.
\end{equation}
where $n$ is the outward normal unit at the boundary of $\Omega$.

\begin{lemma}\label{lem:conserv3}
  Under assumption \eqref{eq:hyp}, the solution of \eqref{eqGLaplacian} verifies that the quantity $\int_\Omega (S(t,x) + E(t,x) + I(t,x) + R(t,x))\,dx$ is constant with respect to time, as well as the quantities $\ln S(t,x) - \int_{\Omega} \beta(x,y) \Phi(t,y)\,dy$ for a.e.\ $x\in \Omega$.
  \end{lemma}
\begin{proof}
  The first conservation relation is obtained straightforwardly by adding and integrating on $\Omega$ the equations in \eqref{eqGLaplacian}.
  For the second, we compute, as usual,
  \begin{equation}
    \partial_t\ln S(t,x) = -\int_{\Omega} \beta(x,y) I(t,y)\ dy.
    \label{lnSL}
  \end{equation}
  Secondly
  \[
    -\Delta  \partial_t \Phi(t,x) +\gamma(x)\partial_t \Phi = \frac{ \partial}{\partial t} (S+E+I)(t,x) =  - \gamma(x) I(t,x) + \Delta I(t,x)  .
  \]
Therefore, by the uniqueness of solution of \eqref{eq:Phi}, which is ensured by the fact that $\underline\gamma >0$ (see \eqref{eq:hyp}), we obtain
  \[
    \partial_t \Phi(t,x)  = -I(t,x).
  \]
Inserting this expression of $I$ in equation \eqref{lnSL} then yields
  \begin{equation}
    \label{eq:Lyap}
    \frac{\partial}{\partial t} \left[-\ln S(t,x) +\int_{\Omega} \beta(x,y) \Phi(t,y) dy \right] =0.
  \end{equation}
  This achieves the proof of Lemma \ref{lem:conserv3}.
\end{proof}

As before, we may deduce existence of limits from the previous result, as stated now.
\begin{lemma}\label{lem:lim2}
  Assume that \eqref{eq:hyp} and \eqref{eq:init} hold. Then the solution of \eqref{eqGLaplacian} verifies~:

  There exist $S_\infty,R_\infty\in L^\infty_+(\Omega)$, $S_\infty\leq S_0$, such that, 
  $$
  \lim_{t\to + \infty} S(t,x) = S_\infty(x),
\lim_{t\to + \infty} R(t,x) = R_\infty(x),
  \text{ and }
\lim_{t\to + \infty} E(t,x)= \lim_{t\to + \infty} I(t,x) = 0,
  $$
 uniformly, for a.e.\ $x\in\Omega$.

  Moreover, for a.e.\ $x\in \Omega$, $\Phi(t,x)$ converges to $\Phi_\infty(x)$ as $t$ goes to $+\infty$ where
\begin{equation}\label{eq:Phiinf}
  \left\{
    \begin{array}{l}
      -\Delta \Phi_\infty(x) +\gamma \Phi_\infty(x) = S_\infty(x),
      \\[2pt]
      \partial_{n}\Phi_\infty(x) = 0 \quad \hbox{ on }  (0,\infty)\times \partial\Omega.
    \end{array}
  \right.
\end{equation}
\end{lemma}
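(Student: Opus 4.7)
The plan is to adapt the argument of Lemma \ref{lem:lim} to the diffusive setting, with the elliptic auxiliary $\Phi$ from \eqref{eq:Phi} playing the role that $S+E+I$ did in the ODE case. From \eqref{eqGLaplacianS}, $\partial_t S\leq 0$, so for a.e.\ $x\in\Omega$ the map $t\mapsto S(t,x)$ is nonincreasing and nonnegative, hence converges to some $S_\infty(x)\leq S_0(x)$. The identity $\partial_t\Phi=-I\leq 0$ derived in the proof of Lemma \ref{lem:conserv3} shows similarly that $\Phi(t,x)$ is nonincreasing in $t$ and nonnegative (by the maximum principle for \eqref{eq:Phi}), so it converges a.e.\ to some $\Phi_\infty(x)\leq\Phi_0(x)$. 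Integrating $\partial_t\Phi=-I$ in time then yields
\[
\int_0^{\infty}I(s,x)\,ds=\Phi_0(x)-\Phi_\infty(x)<\infty\qquad\text{for a.e.\ }x\in\Omega.
\]

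I would then establish $I(t,x)\to 0$ a.e.\ by appealing to parabolic regularity for \eqref{eqGLaplacianI}, viewed as the linear equation $\partial_t I-\Delta I+\gamma I=\alpha E$ with bounded coefficients and bounded source (the latter relying on the $L^\infty$ bound inherited from the well-posedness of the system). Since $\Omega$ is smooth, the De Giorgi--Nash--Moser theory with Neumann boundary condition furnishes a H\"older modulus of continuity for $I$ on $[T,\infty)\times\overline\Omega$, uniform in $T$; combined with the time integrability above, this rules out persistent oscillations and forces $I(t,x)\to 0$. For $E$, exploiting $\partial_t(S+E)=-\alpha E\leq 0$, the left-hand side shows $S+E$ is nonincreasing and converges, and since $S$ also converges, so does $E$, to some $E_\infty\geq 0$; integrating the right-hand side gives $\underline\alpha\int_0^{\infty}E(s,x)\,ds<\infty$, which is incompatible with a positive pointwise limit and forces $E_\infty=0$.

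Finally, to obtain the elliptic problem \eqref{eq:Phiinf} satisfied by $\Phi_\infty$, I would pass to the limit in the weak formulation of \eqref{eq:Phi}: dominated convergence delivers $S(t,\cdot)+E(t,\cdot)+I(t,\cdot)\to S_\infty$ in $L^1(\Omega)$ and $\Phi(t,\cdot)\to\Phi_\infty$ in $L^1(\Omega)$ (both integrands being dominated by $L^1$ data through conservation of total mass and integration of \eqref{eq:Phi_0}), and the Neumann boundary condition is inherited through the variational formulation. The main obstacle is the pointwise decay $I(t,x)\to 0$: time integrability alone does not force pointwise decay, so the parabolic regularity of $I$ is the essential ingredient, and this is where the diffusive case genuinely differs from the ODE analysis carried out in the proof of Lemma \ref{lem:lim}.
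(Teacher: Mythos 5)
Your proposal is correct, and its skeleton (monotonicity of $S$ and of $S+E$, then passage to the limit in the elliptic equation \eqref{eq:Phi}) coincides with the paper's. The genuine difference lies in how you treat the one delicate step, the pointwise decay of $I$: the paper simply says that inserting $E\to 0$ into \eqref{eqGLaplacianI} yields $I(t,x)\to 0$ (implicitly relying on the decay of the semigroup generated by $\Delta-\gamma$, whose spectrum lies below $-\underline\gamma<0$, via a Duhamel argument), whereas you instead exploit the monotonicity $\partial_t\Phi=-I\le 0$ to get $\int_0^\infty I(s,x)\,ds=\Phi_0(x)-\Phi_\infty(x)<\infty$ and then upgrade time-integrability to pointwise decay through uniform parabolic (De Giorgi--Nash--Moser) H\"older estimates, a Barbalat-type argument. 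Both routes are valid; yours has the merit of making explicit the regularity input that the paper leaves implicit, and of producing the convergence of $\Phi$ directly from monotonicity rather than only a posteriori from elliptic regularity, while the paper's semigroup route is shorter and avoids invoking parabolic regularity theory. Two small points to keep in mind: the uniform $L^\infty$ bound on $I$ that feeds the parabolic estimates should be justified (e.g.\ $E\le S_0+E_0$ pointwise and the maximum principle for \eqref{eqGLaplacianI}), since pointwise conservation of $S+E+I+R$ fails in the diffusive case; and for the dominated convergence in $L^1(\Omega)$ the boundedness of $\Omega$ (assumed in Theorem \ref{th2}) is what lets the uniform $L^\infty$ bounds serve as integrable dominants.
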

\begin{proof} As above, for a.e.\ $x\in\Omega$, since $t\mapsto S(t,x)$ is decreasing, it has a limit $S_\infty (x)$, and because $t\mapsto S(t,x)+E(t,x)$ is decreasing, $t\mapsto E(t,x)$ has a limit which vanishes as can be seen adding from \eqref{eqGLaplacianS}--\eqref{eqGLaplacianE}.
\textcolor{magenta}{By the dominated convergence theorem, one deduces also that these convergences hold in $L^1(\Omega)$. 
Then it follows from equations \eqref{eqGLaplacianS} and \eqref{eqGLaplacianE} by application of Arzela-Ascoli's theorem that the convergences of $S(t,\cdot)$ towards $S_\infty$ and of $E(t,\cdot)$ towards $0$ is uniform a.e.~over $\Omega$.}

Inserting this information in equation~\eqref{eqGLaplacianI}, we conclude that, \textcolor{magenta}{$I(t,\cdot)$ converges towards $0$ uniformly a.e.~over $\Omega$ as $t\to +\infty$.}
  Finally, we pass to the limit in equation \eqref{eq:Phi} to get the limit $\Phi_\infty$ of $\Phi$.
\end{proof}

\subsection{The next-generation operator}
\label{se42}

Following the approach of Section \ref{sec:gen}, we define the next-generation operator.

\begin{definition}
For any $t\geq 0$ and {\color{black} $S(t,\cdot)\in L^1(\Omega) \cap L^\infty_+(\Omega)$} solution of \eqref{eqGLaplacian}, we call {\em next-generation operator} $K_{S(t, \cdot)}$ associated to \eqref{eqGLaplacian}, the operator defined over $L^2(\Omega)$ by
\begin{equation}\label{eq:Kdelta}
 K^{\Delta}_{S(t,\cdot)}[ \phi ](x):=S(t,x) \int_{\Omega} \frac{\beta(x,y)}{\gamma(y)} \phi(y)\,dy + \Delta \left(\frac{ \phi(x)}{\gamma (x)}\right),
\end{equation}
with Neumann boundary conditions.
Its adjoint is defined as
\begin{equation}
\label{eq89}
  K^{\Delta*}_{S(t,\cdot)}[\psi](y):= \int_\Omega S(t,x) \frac{\beta(x,y)}{\gamma(y)} \psi(x)\,dx + \frac{1}{\gamma (y)} \Delta \psi(y),
\end{equation}
also with Neumann boundary conditions. 
\end{definition}

 
 The existence of a principal eigenvalue for the next-generation operator is established in the next result, whose proof is postponed to \ref{sec:appKdelta}.
 
 \begin{lemma}\label{lem:Kdelta}
   Let {\color{black} $S\in L^1(\Omega) \cap L^\infty_+(\Omega)$.}
   The operator $K^{\Delta}_{S(\cdot)}$ admits a principal eigenvalue, that is, there exists $r(K_{S(\cdot)}^{\Delta}) > 0$ and functions $\varphi_{S(\cdot)} \in L^2_+(\Omega)$ and $\psi_{S(\cdot)}\in L^2_+(\Omega)$ such that
   \[
     K^\Delta_{S(\cdot)} [\varphi_{S(\cdot)}]  = r(K_{S(\cdot)}^{\Delta})  \varphi_{S(\cdot)}, \quad \text{ in } \Omega, \qquad \partial_n \varphi_{S(\cdot)} = 0, \quad \text{ on } \partial\Omega ,
   \]
   and
   \[
     K^{\Delta*}_{S(\cdot)} [\psi_{S(\cdot)}] = r(K_{S(\cdot)}^{\Delta})  \psi_{S(\cdot)}, \quad \text{ in } \Omega, \qquad \partial_n \psi_{S(\cdot)} = 0, \quad \text{ on } \partial\Omega.
   \]
\end{lemma}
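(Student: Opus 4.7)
The natural strategy is to reduce the eigenvalue problem to a standard Krein--Rutman situation by means of the substitution $u = \phi/\gamma$. With this change of unknown, the equation $K^\Delta_{S(\cdot)}[\phi] = r\phi$ with Neumann boundary condition becomes
\[
-\Delta u + r\gamma(x) u \;=\; S(x) \int_\Omega \beta(x,y) u(y)\,dy \quad \text{in } \Omega, \qquad \partial_n u = 0 \text{ on } \partial\Omega,
\]
and similarly for the adjoint problem after the substitution $v=\psi/\gamma$ (which transposes the integral kernel but preserves the structure). The task thus becomes that of finding a positive $r$ admitting a positive solution $u$.

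The plan is then to view $r>0$ as a parameter. For each such $r$, the operator $L_r := -\Delta + r\gamma$ with Neumann boundary condition is coercive and self-adjoint, hence its inverse $L_r^{-1}$ maps $L^2(\Omega)$ continuously into $H^2(\Omega)$ and therefore compactly into $L^2(\Omega)$. Combining with the bounded integral operator of kernel $\beta$, define
\[
T_r : L^2(\Omega)\to L^2(\Omega),\qquad T_r u := L_r^{-1}\Bigl[S(\cdot)\int_\Omega \beta(\cdot,y)u(y)\,dy\Bigr].
\]
Then $T_r$ is compact, and Hopf's strong maximum principle (applicable since $r\gamma \geq r\underline\gamma > 0$) together with the positivity of $\beta$ and $S$ implies that $T_r$ is strongly positive. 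I would then apply the Krein--Rutman theorem to produce a simple principal eigenvalue $\mu(r)>0$ with a positive eigenfunction $u_r$, along with its adjoint counterpart.

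It remains to select $r$ such that $\mu(r)=1$. A pointwise resolvent comparison (if $r_1<r_2$ and $f\geq 0$, then $L_{r_1}^{-1}f\geq L_{r_2}^{-1}f$ by the maximum principle) shows that $r\mapsto \mu(r)$ is strictly decreasing, while perturbation theory of isolated simple eigenvalues of compact operators yields continuity. For the limits, on the one hand the $L^\infty$ estimate $\|L_r^{-1}f\|_\infty \leq \|f\|_\infty/(r\underline\gamma)$ gives $\mu(r)\leq C/r\to 0$ as $r\to\infty$; on the other hand, testing the equation $-\Delta w + r\gamma w = f$ with the constant function $1$ yields $r\int_\Omega \gamma w\geq \int_\Omega f$, forcing $\|L_r^{-1}f\|_{L^1}\to\infty$ as $r\to 0^+$, whence $\mu(r)\to +\infty$. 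The intermediate value theorem furnishes a unique $r^*>0$ with $\mu(r^*)=1$, and setting $\varphi_{S(\cdot)}:=\gamma u_{r^*}$ and $r(K^\Delta_{S(\cdot)}):=r^*$ gives the desired principal eigenpair; the adjoint eigenfunction $\psi_{S(\cdot)}$ is obtained by the same argument applied to $K^{\Delta*}_{S(\cdot)}$.

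The main obstacle, to my mind, is the behaviour of $\mu(r)$ near $r=0^+$: the Neumann problem for $-\Delta$ alone is singular (the constants lie in the kernel), so one has to quantify precisely how $L_r^{-1}$ blows up; the test-function argument against $1$ turns this potential obstacle into a clean lower bound. All the remaining ingredients --- compactness and strong positivity of $T_r$, monotonicity and continuity of $\mu(r)$, and the transition back to $\varphi_{S(\cdot)}$ and $\psi_{S(\cdot)}$ in the original variables --- are routine consequences of standard elliptic regularity, the Hopf lemma, and Krein--Rutman theory.
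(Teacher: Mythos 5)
Your proof is correct and follows essentially the same route as the paper's: after dividing out $\gamma$, you compose the Neumann resolvent with the positive integral operator, apply Krein--Rutman to the resulting compact, strongly positive one-parameter family, and tune the parameter using continuity, monotonicity and the limiting behaviour at $0^+$ and $+\infty$ so that the principal eigenvalue equals $1$. The only (essentially cosmetic) difference is that you place the parameter in the zeroth-order coefficient as $r\gamma(x)$ and solve $\mu(r)=1$, whereas the paper freezes a constant $M$ in $-\Delta+M$ and rescales the right-hand side by $1/\Lambda_M$; your variant transforms back to $K^\Delta_{S(\cdot)}[\gamma u_{r^*}]=r^*\gamma u_{r^*}$ cleanly, and your handling of the adjoint eigenfunction is, if anything, more explicit than the paper's.
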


The basic reproduction number and the herd immunity domain may then be defined as in Definition \ref{def:R0}.
Notice that an analogue of Lemma \ref{le777}, which links herd immunity and stability, may also be written when diffusion is present.

From Lemma \ref{lem:lim2}, there exists {\color{black} $S_\infty\in L^1(\Omega) \cap L^\infty_+(\Omega)$,} which is the limit of $S(t,\cdot)$. We use the shorthand notation
$K^{\Delta}_\infty := K^{\Delta}_{S_\infty(\cdot)}$ and $K^{\Delta*}_\infty := K^{\Delta*}_{S_\infty(\cdot)}$; its principal eigenvalue is denoted $r^\Delta_\infty:=r(K^{\Delta}_{S_\infty(\cdot)})$ and the associated eigenfunctions are respectively $\varphi_\infty := \varphi_{S_\infty(\cdot)}$ and $\psi_\infty := \psi_{S_\infty(\cdot)}$.

The properties described in Lemma \ref{lem:rKt} still hold in this framework:
\begin{lemma}\label{lem:rKdelta}
Assume that \eqref{eq:hyp}, \eqref{eq:init} hold, and that $r(K^\Delta_{S_0(\cdot)}) > 1$. Then, $t\mapsto r(K^\Delta_{S(t,\cdot)})$ is decreasing, continuous; $\lim_{t\to +\infty} r(K^\Delta_{S(t,\cdot)}) < 1$ and there exists a unique $T_0>0$ such that $r(K^\Delta_{S(T_0,\cdot)}) =1$.
\end{lemma}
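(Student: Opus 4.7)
The plan is to mimic the three-part structure of Lemma \ref{lem:rKt}, with the sole added difficulty coming from the (unbounded) Laplacian term in $K^\Delta_{S(t,\cdot)}$, which forces us to work with the characterization of the principal eigenvalue given by Lemma \ref{lem:Kdelta} rather than with the Gelfand spectral radius formula.

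\textbf{Monotonicity and continuity.} From equation \eqref{eqGLaplacianS}, $t\mapsto S(t,x)$ is nonincreasing and continuous for a.e.\ $x\in\Omega$, and strictly decreasing on the set where $\int_\Omega \beta(\cdot,y)I(t,y)\,dy>0$. Take $0\leq t_1<t_2$ and let $\varphi_2>0$ denote the principal eigenfunction of $K^\Delta_{S(t_2,\cdot)}$ given by Lemma \ref{lem:Kdelta}. Because the operators $K^\Delta_{S(t_1,\cdot)}$ and $K^\Delta_{S(t_2,\cdot)}$ share the same Laplacian term, we directly compute
\[
K^\Delta_{S(t_1,\cdot)}[\varphi_2](x)-r(K^\Delta_{S(t_2,\cdot)})\varphi_2(x)
=\big(S(t_1,x)-S(t_2,x)\big)\int_\Omega\frac{\beta(x,y)}{\gamma(y)}\varphi_2(y)\,dy\geq 0.
\]
Using the variational (Collatz--Wielandt) characterization of the principal eigenvalue of the strongly positive operator $K^\Delta_{S(t_1,\cdot)}$, this super-solution inequality yields $r(K^\Delta_{S(t_1,\cdot)})\geq r(K^\Delta_{S(t_2,\cdot)})$, with strict inequality whenever $S(t_1,\cdot)>S(t_2,\cdot)$ on a set of positive measure (by irreducibility). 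Continuity of the map $t\mapsto r(K^\Delta_{S(t,\cdot)})$ then follows from the $L^\infty$-continuity of $t\mapsto S(t,\cdot)$ and standard continuity of the principal eigenvalue under relatively compact perturbations of the zeroth-order multiplicative part.

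\textbf{Limit strictly below one.} Denote $r^\Delta_\infty=r(K^\Delta_\infty)$ and let $\psi_\infty>0$ be the normalized principal adjoint eigenfunction of $K^{\Delta*}_\infty$, so that $\int_\Omega S_\infty(x)\beta(x,y)\psi_\infty(x)\,dx+\Delta\psi_\infty(y)=r^\Delta_\infty\gamma(y)\psi_\infty(y)$. Multiplying the sum of \eqref{eqGLaplacianE} and \eqref{eqGLaplacianI} by $\psi_\infty$, integrating by parts (Neumann conditions transfer the Laplacian onto $\psi_\infty$), using $S(t,\cdot)\geq S_\infty(\cdot)$ and the adjoint eigenvalue equation, we obtain
\[
\frac{d}{dt}\int_\Omega \psi_\infty(E+I)\,dx
\geq \big(r^\Delta_\infty-1\big)\int_\Omega \gamma(y)\psi_\infty(y)I(t,y)\,dy.
\]
If $r^\Delta_\infty\geq 1$, the right-hand side is nonnegative, so $t\mapsto\int_\Omega\psi_\infty(E+I)\,dx$ is nondecreasing; but this integral is positive for small $t>0$ (since $E_0+I_0\not\equiv 0$ and $\psi_\infty>0$) and converges to $0$ by Lemma \ref{lem:lim2} and dominated convergence. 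This contradiction gives $r^\Delta_\infty<1$.

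\textbf{Conclusion.} Combining $r(K^\Delta_{S_0(\cdot)})>1$, the strict monotonicity and continuity established in the first step, and $\lim_{t\to\infty} r(K^\Delta_{S(t,\cdot)})=r^\Delta_\infty<1$, the intermediate value theorem produces a unique $T_0>0$ with $r(K^\Delta_{S(T_0,\cdot)})=1$. The main obstacle is the first step: unlike in Lemma \ref{lem:rKt} one cannot rely on the Gelfand formula, so the monotonicity of $r(K^\Delta_{S(t,\cdot)})$ has to be extracted from the super-solution inequality above combined with the irreducibility of $K^\Delta_{S(t,\cdot)}$, both of which are granted by Lemma \ref{lem:Kdelta}.
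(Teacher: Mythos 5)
Your proposal is correct, and its second and third steps coincide with the paper's: the paper also tests the sum of \eqref{eqGLaplacianE} and \eqref{eqGLaplacianI} against the adjoint principal eigenfunction $\psi_\infty$ of $K^{\Delta*}_\infty$, moves the Laplacian onto $\psi_\infty$ via the Neumann conditions, and reaches the same differential inequality $\frac{d}{dt}\int_\Omega\psi_\infty(E+I)\,dx\geq (r^\Delta_\infty-1)\int_\Omega\gamma\psi_\infty I$, concluding $r^\Delta_\infty<1$ by contradiction and then producing $T_0$ by the intermediate value theorem. Where you genuinely diverge is the first step. The paper dispatches monotonicity and continuity of $t\mapsto r(K^\Delta_{S(t,\cdot)})$ with the single sentence that it ``follows exactly the proof of Lemma \ref{lem:rKt}'', i.e.\ the Gelfand-formula argument based on $t\mapsto\|K_{S(t,\cdot)}^n[\phi]\|$ being decreasing. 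You correctly observe that this transfer is not automatic: the operator \eqref{eq:Kdelta} is unbounded and, because of the term $\Delta(\phi/\gamma)$, not positivity-preserving, so the iteration $K^n_{S(t_1,\cdot)}[\phi]\geq K^n_{S(t_2,\cdot)}[\phi]$ cannot be propagated naively and the spectral radius here is really the principal eigenvalue constructed in Lemma \ref{lem:Kdelta}. Your replacement --- using $\varphi_{S(t_2,\cdot)}$ as a positive supersolution for $K^\Delta_{S(t_1,\cdot)}$ and comparing via the adjoint eigenfunction (equivalently, a Collatz--Wielandt pairing $\langle\psi_{S(t_1,\cdot)},K^\Delta_{S(t_1,\cdot)}[\varphi_{S(t_2,\cdot)}]\rangle$) --- is sound, yields strict monotonicity by irreducibility, and in fact patches a point the paper leaves implicit. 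The trade-off is that your continuity claim is itself asserted somewhat loosely (``standard continuity of the principal eigenvalue under perturbations of the multiplicative part''); a fully self-contained version would get two-sided bounds on $r(K^\Delta_{S(t,\cdot)})-r(K^\Delta_{S(t',\cdot)})$ from the same pairing, using $\|S(t,\cdot)-S(t',\cdot)\|_{L^\infty}$. Overall: same skeleton as the paper, with a more robust and arguably necessary argument for the monotonicity step.
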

\begin{proof}
  The first point follows exactly the proof of Lemma \ref{lem:rKt}.
  For the existence of $T_0$, as above, we multiply \eqref{eqGLaplacianE} and \eqref{eqGLaplacianI} by $\psi_{\infty}$, and integrating, one finds 
\begin{align*}
\frac{d}{dt} &\int_{\Omega} \psi_{\infty}(x)\big( E(t,x)+I(t,x)\big)\,dx  \\
             &= \int_{\Omega\times \Omega} \psi_{\infty}(x) S(t,x) \beta (x,y) I(t,y)\,dydx - \int_{\Omega}\gamma (x) \psi_{\infty}(x)I(t,x)\,dx \\
  & \qquad + \int_{\Omega}\psi_{\infty}(x)\Delta I(t,x)\,dx  \\
&= \int_{\Omega\times \Omega} K^{\Delta*}_{S(t,\cdot)}[ \psi_{\infty}](y) \, \gamma(y) I(t,y)\,dy - \int_{\Omega} \psi_{\infty}(x) \gamma (x) I(t,x)\,dx  \\
&= \big( r_{\infty}^\Delta -1\big)\int_{\Omega} \gamma (x) \psi_{\infty}(x)I(t,x)\,dx.
\end{align*}
As in the proof of Lemma \ref{lem:rKt}, this proves that there is $T_0>0$ such that herd immunity is reached, that is $r(K_{S(T_0,\cdot)}^{\Delta})=1$. 
\end{proof}

\subsection{Long time behavior and final size equation -- Proof of Theorem \ref{th2}}
\label{se43}

We have now the material to prove Theorem \ref{th2}.
We follow and adapt to the case at hand the proof in Section \ref{sec:gen}.

\paragraph{\it Step 1. Existence of limits.}
The long time convergence has been studied in Lemma \ref{lem:rKt}.
Then, integrating \eqref{eq:Lyap} between $t$ and $+\infty$, we get, for a.e. $x \in \Omega, \; t \geq 0$.
\begin{equation}
  \ln S(t,x) - \int_{\Omega} \beta(x,y) \Phi(t,y)\,dy = \ln S_\infty(x) - \int_{\Omega} \beta(x,y) \Phi_\infty(y)\,dy.
  \label{energyL}
\end{equation}
It may be rewritten straightforwardly
\begin{equation}
\label{eq96}
  S_\infty(x) = S(t,x) \exp \left(\int_{\Omega} \beta(x,y)(\Phi_\infty(y)-\Phi(t,y))\,dy\right),
  \quad \text{ for a.e.\ } x \in \Omega, \; t \geq 0.
\end{equation}
Injecting this identity into the equation for $\Phi_\infty$, and taking $t=0$, we deduce that the long time limit is characterized by \eqref{eq:Phi_inf}.

\paragraph{\it Step 2. Exponential rate of decay.}
When \eqref{eq888} holds, exponential convergence also follows, as in Section~\ref{sec:gen}.
We just indicate the main steps.  Firstly, from~\eqref{energyL}, and because $\Phi(t,x) $ converges to $\Phi_\infty(x)$ almost everywhere, we conclude that for all $\varepsilon$ and for $t \geq T_\varepsilon$, we have $S(t,x) \leq (1+\varepsilon) S_\infty(x)$.

Secondly, adapting formula \eqref{eq88} to the present setting and using the definition of the adjoint operator $K^{\Delta*}_{S(t,\cdot)}$ given in \eqref{eq89}, we compute, for $\theta <1$ but close to $1$
\begin{align*}
&\frac{d}{dt} \int_{\Omega} \psi_{\infty}(x)\big( E(t,x)+ \theta I(t,x)\big)dx \\
&\leq (1+\varepsilon)  \int_{\Omega\times \Omega} \psi_{\infty}(x) S_\infty(x) \beta (x,y) I(t,y)dydx
-  (1-\theta)  \int_\Omega \psi_\infty (x)  \alpha(x) E(t,x) dx  
 \\
& \qquad  - \theta \int_{\Omega} \psi_{\infty}(x) \gamma (x) I(t,x)dx
+ \theta \int_{\Omega}\psi_{\infty}(x)\Delta I(t,x)dx 
\\
&\leq \theta  \int_{\Omega\times \Omega} K^{\Delta*}_{S(t,\cdot)}[ \psi_{\infty}](y) \, \gamma(y) I(t,y)dy
- (1-\theta)  \int_\Omega \psi_\infty (x)  \alpha(x) E(t,x) dx
 \\
&  \qquad - \theta \int_{\Omega} \psi_{\infty}(x) \gamma (x) I(t,x)dx
 + (1+\varepsilon -\theta) \int_{\Omega\times \Omega} \psi_{\infty}(x) S_\infty(x) \beta (x,y) I(t,y)dydx
\\
& \leq - \theta (1-r(K_{S(t,\cdot)}^{\Delta})) \int_{\Omega} \psi_{\infty}(x) \gamma (x) I(t,x)dx 
- (1-\theta)  \int_\Omega \psi_\infty (x)  \alpha(x) E(t,x) dx
 \\
&  \qquad 
+ (1+\varepsilon -\theta) \int_{\Omega\times \Omega} \psi_{\infty}(x) S_\infty(x) \beta (x,y) I(t,y)dydx.
\end{align*}
Taking $t >T_0$ in such a way that $r(K_{S(t,\cdot)}^{\Delta})<1$, choosing $\varepsilon>0$ small enough and $0<\theta<1$ close enough to $1$ to have $1+\varepsilon -\theta<0$, and using the fact (assumed in \eqref{eq:hyp}) that $\inf_{x\in \Omega} \alpha(x)$ and $\inf_{x\in \Omega} \gamma(x)$ are positive together with the positivity of the principal eigenvector $\psi_\infty$, allows to use again Gronwall lemma, and to conclude to the exponential rate of convergence of \textcolor{magenta}{$\int_\Omega \psi_\infty(x) E(t,x)\,dx$ and $\int_\Omega \psi_\infty(x) I(t,x)\,dx$.
  In the same spirit as in the proof of Lemma \ref{lem:rKt}, we have $r(K_\infty^{\Delta*})>0$ and from the relation $K^{\Delta*}_\infty[\psi_\infty] = r(K^{\Delta*}_\infty) \psi_\infty$, we deduce
  $$
  r(K_\infty^{\Delta*})\psi_\infty - \frac{1}{\gamma} \Delta \psi_\infty \geq \frac{\underline{\beta}\ \ess\inf_\Omega S_\infty}{\ess\sup_\Omega \gamma} \int_\Omega \psi_\infty(y)\,dy >0.
  $$
  By comparison principle, we get $\ess\inf_\Omega \psi_\infty>0$.
  Hence, we have the exponential convergence of $\int_\Omega E(t,x)\,dx$ and $\int_\Omega I(t,x)\,dx$. From \eqref{eqGLaplacianE}, we deduce the exponential convergence of $E(t,x)$ towards $0$ as $t\to +\infty$ for a.e. $x\in\Omega$. And from \eqref{eqGLaplacianI}, we have the exponential convergence of $I(t,x)$.
  Then, we may define, for a.e. $x\in\Omega$,
  $$
  R_\infty(x) = R_0(x) + \int_0^{+\infty} I(s,x)\,ds.
  $$
  From \eqref{eqGLaplacianR}, we have
  $$
  R_\infty(x) - R(t,x) = \int_t^{+\infty} I(s,x)\,ds.
  $$
  Then, the exponential convergence of $I$ implies the exponential convergence of $R(t,x)$ to $R_\infty(x)$ for a.e. $x\in \Omega$ as $t\to +\infty$.}
Exponential convergence of $S$ is then obtained as in Section~\ref{sec:gen}.
\textcolor{magenta}{It concludes the proof of the convergences in \eqref{eq18} at exponential rate.}

\paragraph{\it Step 3. Uniqueness for $t$ large enough.}
From Lemma \ref{lem:rKdelta} we may choose $T>0$ large enough such that $r(K^\Delta_{S(T,\cdot)})<1$.
From \eqref{eq96} and \eqref{eq:Phiinf}, we deduce
\begin{equation*}
  -\Delta \Phi_\infty(x) + \gamma(x) \Phi_\infty(x) = S(T,x) \exp\left(\int_\Omega \beta(x,y) (\Phi_\infty(y)-\Phi(T,y))\,dy\right),
\end{equation*}
complemented with Neumann boundary conditions.
Recall that $\Phi(t,\cdot)$ is defined for any $t$ by \eqref{eq:Phi}.
We notice that since $t\mapsto S(t,x)$ is decreasing, we have $S_\infty\leq S(T)$ and by the maximum principle applied to \eqref{eq:Phi}, we deduce also that $\Phi_\infty \leq \Phi(T)$.

Let us consider the problem
\begin{equation}
\label{eq97}
\left\{ \begin{array}{l}
  -\Delta \Big(\frac{u(x)}{\gamma(x)}\Big) + u(x) = S(T,x) \exp\left(\int_\Omega \beta(x,y) \Big(\frac{u(y)}{\gamma(y)}-\Phi(T,y)\Big)\,dy\right),
   \\[5pt]
     \partial_{n} \big(\frac{u(x)}{\gamma(x)}\big)=0 \quad \hbox{ on } \quad (0,\infty)\times \partial\Omega.
\end{array} \right.
\end{equation}

We show the uniqueness of a solution of this problem in the set $\{ 0\leq u \leq \gamma \Phi(T,\cdot) \text{ a.e.}\}$.
Let us assume that there are two solutions $u$ and $v$. 
We have as in Section \ref{se33}, using the fact that the exponential is $1$-Lipschitz in the set of nonpositive real numbers,
\begin{align*}
- \Delta  \Big(\frac{(u-v)(x)}{\gamma(x)}\Big) +  (u - v)(x) \leq S(T,x) \int_\Omega \frac{\beta(x,y)}{\gamma(y)}|u(y)-v(y)|\,dy.
\end{align*}
From Kato's inequality\cite{Kato}, we deduce
\begin{align}\label{ineq1}
- \Delta  \Big(\frac{|u-v|(x)}{\gamma(x)}\Big) +  |u - v|(x) \leq S(T,x) \int_\Omega \frac{\beta(x,y)}{\gamma(y)}|u(y)-v(y)|\,dy.
\end{align}

We recall that, from Lemma \ref{lem:Kdelta}, there exists $\psi_{S(T,\cdot)}$ such that 
\begin{align*}
  K^{\Delta*}_{S(T,\cdot)}[\psi_{S(T,\cdot)}](y) & = \int_\Omega S(T,x) \frac{\beta(x,y)}{\gamma(y)}\psi_{S(T,\cdot)}(x)\,dx + \frac{1}{\gamma(y)} \Delta \psi_{S(T,\cdot)}(y)  \\
  & = r(K^\Delta_{S(T,\cdot)}) \psi_{S(T,\cdot)}(y).
\end{align*}
We use the norm
\[
  \|u\|_T := \int_\Omega \psi_{S(T,\cdot)}(x) |u(x)|\,dx.
\]
Multiplying \eqref{ineq1} by $\psi_{S(T,\cdot)}$ and integrating, we get
\begin{align*}
  \int_\Omega \psi_{S(T,\cdot)}(x) |u-v|(x)\,dx
  \leq & \int_{\Omega\times\Omega} S(T,x) \psi_{S(T,\cdot)}(x) \frac{\beta(x,y)}{\gamma(y)}|u(y)-v(y)|\,dydx \\
       & + \int_\Omega \Delta \psi_{S(T,\cdot)}(x)  \Big(\frac{|u-v|(x)}{\gamma(x)}\Big)\,dx  \\
  \leq & \ r(K^\Delta_{S(T,\cdot)}) \|u-v\|_T.
\end{align*}
Since $r(K^\Delta_{S(T,\cdot)})<1$, we conclude to the uniqueness of the solution of \eqref{eq97}  in the set $\{0\leq u \leq \gamma \Phi(T,\cdot)\}$, and thus to the uniqueness of $\Phi_\infty$, obtained by dividing by $\gamma$, in the set $\{0\leq \Phi \leq  \Phi(T,\cdot)\}$.

\paragraph{\it Step 4. Uniqueness below $\Phi_0$.}  
It remains to extend the uniqueness result to all functions smaller than $\Phi_0$.
Analogously to the argument conducted in Section \ref{sec:gen}, we make use of the following result.
\begin{lemma}
Among the functions $\Phi\leq \Phi_0$, there is a solution $\bar\Phi$ of \eqref{eq:Phi_inf} which is larger than all subsolutions of \eqref{eq:Phi_inf}, and $\Phi_\infty = \bar\Phi$.
\label{lm:maxsonvisc}
\end{lemma}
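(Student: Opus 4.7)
The plan is to mirror the argument of Lemma \ref{le7}, with pointwise identities for $S$ replaced by elliptic identities for $\Phi$. First, I would introduce the iteration operator $T$ sending $\Phi \in L^\infty_+(\Omega)$ with $\Phi \leq \Phi_0$ to the unique solution $T[\Phi]$ of the linear elliptic problem
\[
-\Delta u + \gamma(x) u = S_0(x) \exp\left( \int_\Omega \beta(x,y)(\Phi(y)-\Phi_0(y))\,dy \right), \qquad \partial_n u \big|_{\partial\Omega}=0.
\]
The maximum principle shows that $T$ is monotone and maps the order interval $\{0 \leq u \leq \Phi_0\}$ into itself: indeed, $\Phi \leq \Phi_0$ makes the right-hand side $\leq S_0 = -\Delta \Phi_0 + \gamma\Phi_0 - (E_0+I_0) \leq -\Delta \Phi_0 + \gamma\Phi_0$. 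Starting from $\Phi^{(0)} := \Phi_0$ and iterating $\Phi^{(k+1)} := T[\Phi^{(k)}]$ produces a decreasing sequence bounded below by $0$, converging monotonically (and in $L^2(\Omega)$) to some $\bar\Phi \leq \Phi_0$ solving \eqref{eq:Phi_inf}. By the same monotonicity and an induction, any subsolution $\underline\Phi \leq \Phi_0$ of \eqref{eq:Phi_inf} (meaning $\underline\Phi \leq T[\underline\Phi]$) satisfies $\underline\Phi \leq T^k[\Phi_0] = \Phi^{(k)}$ for all $k$, hence $\underline\Phi \leq \bar\Phi$. Applied to $\Phi_\infty$, which by Lemma \ref{lem:lim2} and Step 1 above is a solution of \eqref{eq:Phi_inf} satisfying $\Phi_\infty \leq \Phi_0$, this already yields $\Phi_\infty \leq \bar\Phi$.

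The remaining, and main, task is to show $\bar\Phi \leq \Phi(t)$ for every $t \geq 0$, so that passing to the limit $t \to +\infty$ gives $\bar\Phi \leq \Phi_\infty$, and hence $\Phi_\infty = \bar\Phi$. I would proceed by contradiction. At $t=0$, the strong maximum principle applied to
\[
-\Delta(\Phi_0 - \bar\Phi) + \gamma(\Phi_0 - \bar\Phi) = (S_0 - \bar S) + (E_0 + I_0),
\]
whose right-hand side is nonnegative (using $\bar\Phi \leq \Phi_0$, hence $\bar S \leq S_0$) and nontrivial since $E_0 + I_0 \not\equiv 0$, yields $\Phi_0 > \bar\Phi$ strictly in $\bar\Omega$. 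Were $\bar\Phi \leq \Phi(t)$ to fail, let $t^*>0$ be the first time at which $w := \Phi(t^*,\cdot) - \bar\Phi(\cdot) \geq 0$ touches $0$ at some point $x_0 \in \bar\Omega$. The conservation relation \eqref{eq:Lyap} gives $S(t^*,x) = \bar S(x)\exp\bigl(\int_\Omega \beta(x,y)w(y)\,dy\bigr) \geq \bar S(x)$, so subtracting \eqref{eq:Phi_inf} from \eqref{eq:Phi} at $t = t^*$ produces
\[
-\Delta w + \gamma w = (S(t^*) - \bar S) + E(t^*) + I(t^*) \geq 0.
\]
At an interior touching point $x_0$ one has $-\Delta w(x_0) \leq 0$ (while Hopf's lemma handles the boundary case under the Neumann condition), so the left-hand side at $x_0$ is $\leq 0$; combined with the nonnegative right-hand side, both sides must vanish at $x_0$. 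In particular $S(t^*,x_0) = \bar S(x_0)$, i.e.\ $\int_\Omega \beta(x_0,y)w(y)\,dy = 0$, and positivity of $\beta$ together with $w \geq 0$ forces $w \equiv 0$, that is $\Phi(t^*) \equiv \bar\Phi$. Reinjecting this equality into the equations satisfied by $\Phi(t^*)$ and $\bar\Phi$ yields $E(t^*) + I(t^*) \equiv 0$, which contradicts the strict positivity of $I(t^*,\cdot)$ on $\bar\Omega$ obtained from the parabolic strong maximum principle applied to \eqref{eqGLaplacianI} under $E_0 + I_0 \not\equiv 0$.

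The step I expect to be the main obstacle is precisely this last touching argument: one must combine the elliptic strong maximum principle (with Hopf's lemma at boundary points), the positivity of $\beta$ used to collapse $\int_\Omega \beta(x_0,\cdot)w = 0$ into $w \equiv 0$, and a parabolic strong maximum principle on \eqref{eqGLaplacianI} to guarantee the pointwise positivity $I(t^*,\cdot) > 0$. Under the weaker assumption of strong positivity of the next-generation operator $K^\Delta_{S_0(\cdot)}$ in place of $\beta > 0$, the argument should carry through similarly, with $w \equiv 0$ deduced through irreducibility of the associated semigroup rather than from pointwise positivity of $\beta$.
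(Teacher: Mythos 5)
Your proposal follows essentially the same route as the paper: the same monotone iteration $\Phi^{(k+1)}=T[\Phi^{(k)}]$ started at $\Phi_0$ to build the maximal solution $\bar\Phi$, the same maximum-principle induction to dominate all subsolutions, and the same first-touching-time contradiction combining the elliptic comparison at $x_0$ with the conservation relation \eqref{eq:Lyap} to force $E(t^*,x_0)+I(t^*,x_0)\le 0$. Your version merely makes explicit some points the paper leaves implicit (Hopf's lemma at boundary touching points and the parabolic strong maximum principle giving $I(t^*,\cdot)>0$), so it is correct and not a genuinely different argument.
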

Lemma \ref{lm:maxsonvisc} implies that any other solution $\underline{\Phi}$ of \eqref{eq:Phi_inf} satisfies $\underline{\Phi}\leq \Phi_\infty$, then obviously $\underline{\Phi} \leq \Phi(T,\cdot)$, but by the uniqueness result below $\Phi(T,\cdot)$ obtained in Step 3, we deduce that $\underline{\Phi}= \Phi_\infty$.
To achieve the proof of Theorem \ref{th2}, it now only remains to prove the previous lemma.

\begin{proof}[Proof of Lemma \ref{lm:maxsonvisc}]
  The proof follows closely the proof of Lemma \ref{le7} in Section \ref{se33}.
  For the sake of completeness, we repeat the argument in this framework.
Firstly, we build a sequence $\Phi_k(x)$ departing from $\Phi_0$ and defined by induction by, for $k\geq 0$, $\Phi_{k+1}(x)$ is a solution to 
  \[
  \left\{ \begin{array}{l}
    -\Delta \Phi_{k+1} + \gamma \Phi_{k+1} = S_0 \exp \left(\int_\Omega \beta(x,y)(\Phi_k(y)-\Phi_0(y))\,dy\right),
    \\[5pt]
     \partial_{n}\Phi_{k+1}=0 \quad \hbox{ on } \quad  \partial\Omega.
     \end{array} \right.
 \]
 Clearly, from \eqref{eq:Phi_0} and the maximum principle, we deduce that $\Phi_1 \leq \Phi_0$ on $\Omega$. Then iterating we deduce that $\Phi_{k+1} \leq \Phi_k$ for any $k\geq 0$. Hence $(\Phi_k)_k$ is a non-increasing sequence which converges to a solution of \eqref{eq:Phi_inf} denoted by $\bar\Phi$.
 
 Secondly, if $\underline{\Phi}$ is a subsolution of \eqref{eq:Phi_inf} which verifies $\underline{\Phi}\leq \Phi_0$. We have
 \begin{align*}
   -\Delta \underline{\Phi} + \gamma \underline{\Phi}\ 
   & \leq S_0 \exp \left(\int_\Omega \beta(x,y)(\underline{\Phi}(y)-\Phi_0(y))\,dy\right)  \\
   & \leq S_0 = -\Delta \Phi_{1} + \gamma \Phi_{1}  .
 \end{align*}
 By the maximum principle, we deduce that $\underline{\Phi}\leq \Phi_1$. Iterating, we deduce that $\underline{\Phi}\leq \Phi_k$ for any $k\geq 0$. Thus $\underline{\Phi}\leq \bar\Phi$. In particular, we have $\Phi_\infty\leq \bar\Phi$.

 Thirdly, we prove that $\bar\Phi\leq \Phi_\infty$. We have $\bar\Phi\leq \Phi_0$. Let us show that $\bar\Phi\leq \Phi(t)$ for all $t\geq 0$. By contradiction, if it is not true, there exists a first time $t_0$ for which $\Phi(t_0)$ touches $\bar\Phi$, i.e.\ $\bar\Phi\leq \Phi(t_0)$ and for some $x_0\in\Omega$, $\bar\Phi(x_0)=\Phi(t_0,x_0)$. Then, $\Delta \bar\Phi(x_0)\leq \Delta\Phi(t_0,x_0)$ and we have
 \begin{align*}
   S(t_0,x_0) + E(t_0,x_0) + I(t_0,x_0)\ 
   & = -\Delta \Phi(t_0,x_0) + \gamma \Phi(t_0,x_0) \\
   & \leq -\Delta \bar\Phi(x_0) + \gamma \bar\Phi(x_0) \\
   &  =  S_0 \exp \left(\int_\Omega \beta(x_0,y)(\bar\Phi(y)-\Phi_0(y))\,dy\right).
 \end{align*}
 Moreover, from \eqref{eq:Lyap}, we deduce
 \begin{align*}
   S(t_0,x_0)
   & = S_0 \exp \left(\int_\Omega \beta(x_0,y)\big(\Phi(t_0,y)-\Phi_0(y)\big)\,dy\right)  \\
   & \geq S_0 \exp \left(\int_\Omega \beta(x_0,y)\big(\bar\Phi(y)-\Phi_0(y)\big)\,dy\right).
 \end{align*}
 It is a contradiction which concludes the proof of Lemma~\ref{lm:maxsonvisc} and thus of uniqueness of $\Phi_\infty$.
\end{proof}

\section{Examples: finite rank contact matrices}
\label{se5}

The results of Section~\ref{sec:AMR} apply to finite rank contact matrices which have been studied widely with several applications as distributed susceptibility,\cite{thieme_book,thiemesisd}.
We show in Section \ref{se51} that for rank-1 contact matrices, the general formulas previously found may be simplified.
An alternative approach is presented in Section \ref{se52} in the case where $\gamma$ is constant, and generalized to finite rank matrices in Section \ref{se53}.

\subsection{SEIR system with rank-$1$ matrices}
\label{se51}

The case of rank-$1$ contact matrices refers to  matrices  $\beta(x,y)$ which we may be split into 
\[
\beta(x,y)=\beta (x) p(y).
\]
Then system \eqref{eqG1} reads
\begin{subequations}
\label{eq1}
\begin{gather} 
  \dot S(t,x) = -\beta(x) S(t,x) \int_\Omega p(y) I(t,y)\ dy,\qquad S(0,x) = S_0(x),  \label{eq1a} \\
  \dot E(t,x) = \beta(x) S(t,x) \int_\Omega p(y) I(t,y)\ dy - \alpha(x) E(t,x), \qquad E(0,x) = E_0(x), \label{eq1b} \\
\dot I(t,x) = \alpha(x) E(t,x) - \gamma(x) I(t,x),\qquad I(0,x) = I_0(x), \label{eq1c} \\
\dot R(t,x) = \gamma(x) I(t,x), \qquad R(0,x) = R_0(x), \label{eq1d}
\end{gather}
\end{subequations}
where $\alpha$ and $\gamma$ satisfy assumptions~\eqref{eq:hyp},  $\beta$ and $p$ are bounded and positive, and the initial data are given non-negative functions satisfying the conditions in~\eqref{eq:init}.
The result in Theorem \ref{th1} applies.

We can identify explicitly the next-generation operator $K_{S(t,\cdot)}$ in \eqref{def:Kt} as
\begin{align*}
  & K_{S(t,\cdot)} [\phi](x) = \beta(x) S(t,x) \int_\Omega \frac{p(y)}{\gamma(y)}\phi(y)\,dy,  \\
  & K^{\star}_{S(t,\cdot)}[ \psi](y) =  \frac{p(y)}{\gamma(y)} \int_\Omega \beta(x) S(t,x)\psi(x)\,dx.
\end{align*}
It is worth noticing that, for this operator, the principal eigenelements  are (up to normalisation) 
\[ 
\left\{ \begin{array}{l}
    r(K_{S(t,\cdot)}) = \int_\Omega\frac{\beta(y) p(y)}{\gamma(y)}S(t,y)\,dy,
    \\[5pt]
  \varphi_{S(t,\cdot)}(x)= \beta(x) \, S(t,x), \qquad   \psi_{S(t,\cdot)}(y)=  \frac{p(y)}{\gamma(y)}.
\end{array} \right.
\]
Indeed, both  $\varphi_{S(t,\cdot)}$ and $\psi_{S(t,\cdot)}$  are positive and satisfy the eigenvalue property
\[
K_{S(t,\cdot)} [\beta S(t,\cdot)](x) =  r(K_{S(t,\cdot)})\, \beta(x) S(t,x), \quad K^{\star}_{S(t,\cdot)} \Big[ \frac{p(\cdot)}{\gamma(\cdot)}\Big](y) =  r(K_{S(t,\cdot)})\,  \frac{p(y)}{\gamma(y)} .
\]

Hence, in this case, the basic reproduction number introduced in Definition~\ref{def:R0} just reads:
\begin{equation}
\label{eq555}
    \calR_0 := \int_\Omega S_0(x) \frac{\beta(x)p(x)}{\gamma(x)}\,dx,
\end{equation}
and the herd immunity domain is reached for distributions $\bar S$ such that 
\[
    \int_\Omega \bar S(x)\frac{\beta(x)p(x)}{\gamma(x)}\,dx < 1.
\]
Lastly, the computation of the final size reduces to 
  \begin{equation*}
    S_\infty (x)    = S_0(x) \exp\left(
      \beta(x) \int_\Omega \dfrac{p(y)}{\gamma(y)}\left(
       S_\infty(y) - S_0(y) - E_0(y) - I_0(y) \right)\,dy  \right),
\end{equation*}
with $S_\infty \leq S_0$.

\subsection{An alternative approach for SIR system with rank-$1$ matrices}
\label{se52}

In the simpler case of the SIR system with finite rank contact matrices, another method can be used to study the final size. We begin with rank-1 matrices, for which the formulas are simpler, and assume that $\gamma(x)$ is constant.
Analogously to \eqref{eq1}, the system reads
\begin{subequations}
\label{eq1SIR}
\begin{gather}
\label{eq1SIRa}
  \dot S(t,x) = -\beta(x) S(t,x) \int_\Omega p(y) I(t,y)\,dy,\qquad S(0,x) = S_0(x),  \\
\label{eq1SIRb}
  \dot I(t,x) = \beta(x) S(t,x) \int_\Omega p(y) I(t,y)\,dy- \gamma I(t,x) ,\qquad I(0,x) = I_0(x),\\
\label{eq1SIRc}
\dot R(t,x) = \gamma I(t,x), \qquad R(0,x) = R_0(x).
\end{gather}
\end{subequations}

In that case, one can reduce the system to a single ODE by introducing the quantity
\begin{equation}  \label{SIReqm}
m(t) :=\int_0^t \int_{\Omega} p(y) I(s,y) \,dy ds.
\end{equation}
As a matter of fact, the equation for $S(t,x)$ can be solved as
\[
\ln S(t,x) = \ln S_0(x) -  \beta(x) m(t) .
\]
Inserting expression \eqref{SIReqm} in the equation \eqref{eq1SIRb} for $I$,  and integrating against the weight $p(\cdot)$, we find
\[ \begin{array}{rl}
\displaystyle \frac{d}{dt}   \dot m (t) &=\displaystyle  \frac{d}{dt} \int_\Omega p(x) I(t, x) \,dx = \dot  m(t) \int_\Omega \beta(x) p(x) S(t,x) \,dx - \gamma \dot m(t)
\\
&\displaystyle =  \dot m(t) \int_\Omega \beta(x) p(x) S_0(x) e^{-\beta(x) m(t)}\,dx- \gamma \dot m(t)  .
\end{array}\]
Integrating once, one finds, since $\dot m(t=0) = \int_\Omega p(y) I_0(y)\,dy $, 
\begin{equation}
\dot m (t) = \int_\Omega p(x) [I_0+ S_0](x) \,dx  - \int_\Omega p(x) S_0(x) e^{-\beta(x) m(t)} \,dx- \gamma m(t), \qquad m(0)=0  .
\label{IntRk1}
\end{equation}

Herd immunity is reached when $\dot m=0$, that is when 
\begin{equation*}
\int_\Omega p(x) [I_0+ S_0](x) \,dx  =  \int_\Omega p(x) S_0(x) e^{-\beta(x) m_{H}} \,dx + \gamma m_{H}
=:F(m_H)
\end{equation*}
which, for any $I_0>0$  has a single root $m_H >0$ which is attractive. 
Indeed, $F$ is a smooth and convex function on $\mathbb{R}^+$ such that $F(0)< \int_\Omega p(x) [I_0+ S_0](x)\,dx$ and $\lim_{m\to +\infty} F(m)=+\infty$.
For $I_0$ small, $m_H$ may be close or far from $S_0$ depending on the stability of $I\equiv 0$ which is determined by $\calR_0= 1-\frac{F'(0)}{\gamma}$.

\subsection{SIR system with rank-$N$ matrices}
\label{se53}

The reduction to a single equation for rank-1 matrices can be extended to rank-$N$ contact matrices, i.e., which may be written
$$
\beta (x,y) = \sum_{i=1}^{N} \beta_{i}(x) p_{i}(y).
$$ 
Following the above construction, we introduce the matrix $M$ defined by 
\[
  M_{i,j}:= \int_\Omega \frac{\beta_{i}(x) p_{j}(x)}{\gamma(x)} S_{0}(x) \,dx.
\]
This matrix has positive coefficients and thus we can define its principal eigenvalue $\lambda$ and the associated eigenvector $X= (X_i)_{i\in\{1,\dots,N\}}$, with $X_i>0$ for all $i=1,...,N$.
\begin{lemma}
The basic reproduction number, defined in Definition~\ref{def:R0} is given by 
$$
\calR_0= r_{S_0(\cdot)}=\lambda
$$
 and (up to normalisation) the corresponding eigenfunctions are 
$$
\phi_{S_0(\cdot)} (x):=\sum_{i=1}^{N} X_{i}\beta_{i}(x) S_{0}(x), \qquad \psi_{S_0(\cdot)} (y):=\sum_{i=1}^{N} X_{i} \frac{p_{i}(y)}{ \gamma(y)}.
$$
\end{lemma}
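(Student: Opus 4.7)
The plan is to proceed by direct verification, substituting the proposed expressions for $\phi_{S_0(\cdot)}$ and $\psi_{S_0(\cdot)}$ into the definitions \eqref{def:Kt} and \eqref{def:Ktstar} of $K_{S_0(\cdot)}$ and its adjoint, and then to identify the resulting eigenvalue with the spectral radius (hence with $\calR_0$) by appealing to Krein--Rutman.

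First, I would exploit the rank--$N$ factorisation $\beta(x,y)=\sum_i \beta_i(x)p_i(y)$ to rewrite
\[
K_{S_0(\cdot)}[\phi](x) = S_0(x) \sum_{i=1}^N \beta_i(x) \int_\Omega \frac{p_i(y)}{\gamma(y)} \phi(y)\,dy.
\]
Plugging in the candidate $\phi_{S_0(\cdot)}(x)=S_0(x)\sum_j X_j \beta_j(x)$ and swapping sum and integral leads to the coefficient $\sum_j X_j M_{j,i}$ in front of $\beta_i(x)S_0(x)$, where $M$ is the matrix introduced just above the lemma. The eigenvector property of $X$ then turns this coefficient into $\lambda X_i$, proving $K_{S_0(\cdot)}[\phi_{S_0(\cdot)}]=\lambda\phi_{S_0(\cdot)}$. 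The parallel calculation for $K^\star_{S_0(\cdot)}$ applied to $\psi_{S_0(\cdot)}(y)=\sum_i X_i \frac{p_i(y)}{\gamma(y)}$ produces, via the adjoint formula \eqref{def:Ktstar}, the coefficient $\sum_i M_{i,j} X_i$ in front of $\frac{p_j(y)}{\gamma(y)}$, which again collapses to $\lambda X_j$ by the eigenvector property.

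Second, I would observe that, because $S_0,\beta_i,p_i>0$ and $\gamma>0$ by \eqref{eq:hyp}, and because the Perron--Frobenius eigenvector $X$ of the positive matrix $M$ has strictly positive entries, both $\phi_{S_0(\cdot)}$ and $\psi_{S_0(\cdot)}$ are strictly positive on $\Omega$. By Krein--Rutman theory (as recalled in Section~\ref{se32}), the spectral radius of the compact, strongly positive operator $K_{S_0(\cdot)}$ is its unique eigenvalue carrying a positive eigenfunction. Hence $\lambda = r(K_{S_0(\cdot)})$, which by Definition~\ref{def:R0} is exactly $\calR_0$.

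The only potentially delicate point, which is essentially bookkeeping, is the direction of the eigenvector: the first computation requires $X$ to satisfy $X^\top M=\lambda X^\top$ while the second requires $MX=\lambda X$. Since $M$ has positive entries, its left and right Perron eigenvalues coincide and both associated eigenvectors are positive, so the symbol $X$ can legitimately stand for either at the cost only of reinterpreting the single scalar $\lambda=\calR_0$. No other delicate step arises: everything follows from linearity, Fubini, and the finite-dimensional Perron--Frobenius theorem transferred to the operator via the explicit factorisation of $\beta$.
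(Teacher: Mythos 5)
Your proposal is correct and follows essentially the same route as the paper: substitute the candidate eigenfunctions into \eqref{def:Kt} and \eqref{def:Ktstar}, use the factorisation of $\beta$ to reduce the integrals to entries of $M$, and invoke the eigenvector property of $X$. You are in fact slightly more careful than the paper on two points it leaves implicit, namely that the $\phi$-computation uses the left Perron eigenvector of $M$ while the $\psi$-computation uses the right one (both sharing the eigenvalue $\lambda$), and that the identification $\lambda = r(K_{S_0(\cdot)}) = \calR_0$ rests on the Krein--Rutman characterisation of the spectral radius via positivity of the eigenfunctions.
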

\
\begin{proof} For the given expression of $\phi_{S_0(\cdot)}$, we compute
\begin{align*}
K_{S_0(\cdot)}[\phi_{S_0(\cdot)}] (x) =\ & \sum_{i,j=1}^{N} S_{0}(x) \beta_{i}(x)\int_y \frac{p_{i}(y)}{\gamma(y)} \beta_{j}(y)X_{j}\,dy \\
=\ &  \sum_{i,j=1}^{N} S_{0}(x) \beta_{i}(x) M_{i,j}X_{j} \\
=\ & \lambda \sum_{i=1}^{N} S_{0}(x) \beta_{i}(x) X_{i} = \lambda \phi_{S_0(\cdot)}
\end{align*}
which proves the result for $\phi_{S_0(\cdot)} $, the same calculation can be performed for $ \psi_{S_0(\cdot)} $.
\end{proof}

Notice that the characterization  of  $\calR_0$ given in the statement extends \eqref{eq555}.
\bigskip

For rank-$N$ matrices, it is also noticeable that one can also reduce the SIR system to $N$ differential equations as we did in the rank-1 case.
We  assume that $\gamma$ is independent of $x$ and define for $j=1,...,N$, 
\[
m_j (t) = \int_0^t p_j(y) I(t,y) dy, \qquad Q(t,x)= \sum_{k=1}^N \beta_k(x) m_k(t).
\]
We have
\[
\dot S(t,x) = - S(t,x) \dot Q(t,x), \qquad S(t,x) = S_0(x) e^{-Q(t,x)}.
\]

From the equation for infected, we compute
\begin{align*}
  \frac{d}{dt} \dot m_i(t) & = \int_\Omega p_i(x) S(t,x) \dot Q(t,x) dx - \gamma \dot m_i(t)  \\
  & = \int_\Omega p_i(x) S_0(x) e^{-Q(t,x)} \dot Q(t,x) dx- \gamma \dot m_i(t) .
\end{align*}
Integrating once in $t$, we obtain the equations, for $i=1,...,N$, 
\begin{equation}
\label{eq556}
 \dot m_i(t)  = - \int_\Omega p_i(x) S_0(x) e^{-Q(t,x) } dx - \gamma m_i(t) +  \int_\Omega p_i(x) [S_0(x)+I_0(x)]dx, \quad m_i(0)=0 .
 \end{equation}
Notice that \eqref{eq556} extends \eqref{IntRk1}.
Conceptually, this is a system of $N$ differential nonlinear equations, which can be computed at least numerically.

\section{Conclusion}

We  considered an epidemic in a heterogeneous population modelled by a SEIR system with a continuous structure variable and a general contact matrix. We  investigated question of the final size of the epidemic both with and without diffusion. In this general framework, our main contribution is to prove uniqueness results for the equation characterizing the final size of the epidemic. The proof combines a contraction property close to the steady state with a monotonicity argument which localized roughly the final state.

Although we presented our results for a SEIR model, they can be extended to other compartmental models, like the SIR model and to more general equations than the simple diffusion. 

The main limitation comes from the specific form of the SEIR model excluding birth and loss of immunity. In those models, up to our knowledge, proving merely convergence to a steady state is an open problem.

\appendix

\section{Proof of Lemma \ref{lem:Kdelta}}
\label{sec:appKdelta}

In this technical part, we show that the operator defined on $L^2(\Omega)$ by
\[
  K^{\Delta}_{S(\cdot)}[\varphi](x):=S(x) \int_{\Omega} \frac{\beta(x,y)}{\gamma(y)} \varphi(y)\,dy +\Delta \left(\frac{\varphi(x)}{\gamma (x)}\right),
\]
admits a principal eigenvalue, that is, that there exist $\lambda>0$ and a function $\varphi>0$ such that $ K^{\Delta}_{S(\cdot)}\varphi=\lambda \varphi$ in $\Omega$, with $\partial_{n}\varphi = 0$ in $\partial \Omega$.

It is more convenient to work with $\phi = \varphi / \gamma$. Let the operator $R$ be defined as
\[
  R\phi(x):=\frac{S(t,x)}{ \gamma (x)}\int_{\Omega}\beta(x,y)\phi(y)\,dy.
\]

We define for all $M>0$ the operator:
\[
  T\phi :=\Big( (-\Delta +M)^{-1} \circ R\Big) \phi,
\]
where $(-\Delta +M)^{-1} f$ is the unique solution $u$  of $-\Delta u+Mu = f$ in $\Omega$, with $\partial_{n}u=0$ on $\partial\Omega$.

This operator is clearly compact and strongly positive, from $L^{2}(\Omega)$ to  $L^{2}(\Omega)$. The Krein-Rutman theorem thus guarantees the existence of a principal eigenvalue $\Lambda_{M}$, associated with a positive eigenfunction $\phi_{M}$ that we normalize by $\int_{\Omega}(\phi_{M}(x))^{2}\,dx=1$. The identity $T\phi_{M}=\Lambda_{M}\phi_{M}$ rewrites
\begin{equation}\label{eq:eigenb}
  -\Delta \phi_{M}+M\phi_{M}= \frac{1}{\Lambda_{M}} R\phi_{M} \quad \hbox{in } \Omega.
\end{equation}
Moreover, the function $M\mapsto \Lambda_{M}$ is clearly continuous by Kato's regularity theory.

On the one hand, multiplying by $\phi_{M}$ and integrating by parts, one finds
\begin{align*}
  \frac{1}{\Lambda_{M}} \langle R\phi_{M},\phi_{M} \rangle_{L^{2}(\Omega)}&= \int_{\Omega}|\nabla \phi_{M} |^{2}\,dx + M\int_{\Omega}\big(\phi_{M}(x)\big)^{2}\,dx  \\
  &\geq M\int_{\Omega} \big(\phi_{M}(x)\big)^{2} \,dx.
\end{align*}
On the other hand, one has
\begin{align*}
  \langle R\phi_{M},\phi_{M} \rangle_{L^{2}(\Omega)} & = \int_{\Omega\times \Omega} \frac{1}{ \gamma (x)}\beta(x,y)S(t,x)\phi_{M}(x)\phi_{M}(y)\,dydx    \\
&\leq  C\left( \int_{\Omega}\phi_{M}(x)\,dx \right)^{2},
\end{align*}
for some constant $C>0$ due to (\ref{eq:hyp}) and $S(t,x) \leq S_{0}(x)$.
Hence, using the Cauchy-Schwarz inequality, one gets (increasing $C$ if necessary) $\frac{C}{\Lambda_{M}} \geq M$ and thus $\Lambda_{M}\to 0$ as $M\to +\infty$.

Moreover, integrating equation (\ref{eq:eigenb}) and using again our hypotheses  (\ref{eq:hyp}), one gets
\[
  M\int_{\Omega}\phi_{M}(x)\,dx = \frac{1}{\Lambda_{M}} \int_{\Omega\times \Omega} \frac{1}{ \gamma (x)}\beta(x,y)S(t,x)\phi_{M}(y)\,dydx \geq \frac{1}{C\Lambda_{M}}\int_{\Omega}\phi_{M}(x)\,dx
\]
for some positive constant $C>0$.
Hence, $CM \geq 1/\Lambda_{M}$ and thus  $\Lambda_{M}\to +\infty$ as $M\to 0$.

We conclude that there exists $M>0$ such that $\Lambda_{M}=1$ by the intermediate value theorem, and thus (\ref{eq:eigenb}) exactly yields, by letting $\varphi = \gamma \phi_{M}$:
\[
  K^{\Delta}_{S(\cdot)}\varphi = M\varphi,
\]
with $\varphi>0$ in $\Omega$. Letting $\lambda:=M$ ends the proof.

\section*{Acknowledgment}

The authors acknowledge the unknown referee for his/her comments and suggestions which allow to improve this paper.

B.P. has received funding from the European Research Council (ERC) under the European Union's Horizon 2020 research and innovation programme (grant agreement No 740623).

\end{color}

\bigskip

\end{document}